\newtheorem{theorem}{Theorem}[section]
\newtheorem{definition}[theorem]{Definition}
\newtheorem{proposition}[theorem]{Proposition}
\newtheorem{lemma}[theorem]{Lemma}
\newtheorem{remark}[theorem]{Remark}
\newtheorem{notation}[theorem]{Notation}
\numberwithin{equation}{section}
\newcommand{\keywords}[1]{\par\noindent\textit{Key words and phrases. } #1\par}
\newcommand{\msc}[1]{\par\noindent\textit{2020 Mathematics Subject Classification. } #1\par}
\newcommand{\B}{\mathcal{B}}
\newcommand{\R}{\mathbb{R}}
\newcommand{\T}{\mathbb{T}}
\newcommand{\N}{\mathbb{N}}
\newcommand{\Z}{\mathbb{Z}}
\newcommand{\ex}{\mathbb{E}}
\author{{Ioannis Gasteratos\thanks{TU Berlin, \textit{email address}: i.gasteratos@tu-berlin.de}}\;\;and {Zachary Selk\thanks{FSU, \textit{email address}: zselk@fsu.edu}}}
\title{On the Onsager-Machlup functional of the $\Phi^4$-measure}
\date{\today}
\begin{document}

\maketitle

\begin{abstract}
We investigate the existence of generalised densities for the $\Phi^4_d$ $(d=1,2,3)$ measures, in finite volume, through the lens of Onsager-Machlup (OM) functionals. The latter are rigorously defined for measures on metric spaces as limiting ratios of small ball probabilities. In one dimension, we show that the standard OM functional of the $\Phi^4_1$ measure coincides with the $\Phi^4$ action as expected. In two dimensions, we show that OM functionals of the $P(\Phi)_2$ measures agree with the corresponding actions, by considering ``enhanced" distances, defined with respect to Wick powers of the Gaussian Free Field, which are analogous to rough path metrics. In dimension $3$, two natural generalisations of the OM functional are proved to be degenerate.  Finally, we recover the  $\Phi^4_3$ action, under appropriate regularity conditions, by considering joint small radius-large frequency limits.
\end{abstract}

\msc{Primary: 60H30, Secondary: 28C20, 81T08}
\vspace{2mm}
\keywords{Onsager-Machlup functional, $\Phi^4$-measure, Wick power, Gibbs measures}

\color{black}

\section{Introduction}
\subsection{$\Phi_d^4$ theory}\label{subsec:IntroPhi4}
The $\Phi_d^4$ theory in finite volume is concerned with the action functional that is formally defined by
\begin{equation}\label{eq:action}
    S(\phi):=\int_{\mathbb T^d} \left(\frac{1}{4}\phi(x)^4+\frac{m}{2} \phi(x)^2+\frac{1}{2}|\nabla \phi(x)|^2\right) dx,
\end{equation}
where $\phi$ lies in the space $\mathscr{S}'(\T^d)$ of Schwartz distributions on the $d$-dimensional torus $\mathbb T^d.$ 
The theory describes the quartic self-interaction of a single scalar quantum field of mass $m\geq 0$ and also appears as a continuum effective field theory of discrete statistical systems near criticality; see e.g.  \cite{cassandro1995corrections, hairer2018tightness} and references therein for the case of Ising-Kac models in $d=2.$

From the viewpoint of Constructive Quantum Field Theory (CQFT), the corresponding ``Gibbs" probability measure
\begin{equation}\label{eq:phi4-density}
    \mu=``\frac{1}{Z}e^{-S(\phi)}d^\infty \phi",
\end{equation}
where $d^\infty \phi$ is the nonexistent infinite dimensional Lebesgue measure and $Z$ is a normalization constant, is of prime importance. Indeed, $\mu$ represents a simple example of Euclidean QFTs, a class of measures on $\mathscr{S}'(\T^d)$ whose correlation functions satisfy certain properties known as the Osterwalder-Schrader Axioms. Upon verification of these axioms, one obtains, from a Euclidean QFT, a (Bosonic) relativistic QFT, in the sense of Wightman, in $d-$dimensional spacetime. The latter is one of the central aims of CQFT; the reader is referred for example to \cite[Section 1.1]{barashkov2022variational} or \cite[Section 6.1]{glimm2012quantum} and references therein, for a more detailed introduction to the so-called Osterwalder-Schrader reconstruction theorem.

Besides the absence of Lebesgue measure, rigorous constructions of $\mu$ present major mathematical challenges. Indeed,
since $\phi$ is a distribution and $S$ contains polynomial nonlinearities, both \eqref{eq:action} and $\mu,$ are a priori ill-defined. Apart from the simple case of $d=1,$ where one may actually define $\mu$ in a space of functions, such problematic products of distributions are present for any $d\geq 2.$ In essence, the rationale behind the achievements of the CQFT program in the 70's and 80's is to interpret and construct $\mu$ as the limit of appropriately ``renormalised" measures $\{\mu_n\}_{n\in\N}.$ Renormalisation here refers to a systematic framework for replacing divergent quantities, that appear in finite-dimensional approximations of $\mu$, by well behaved counterparts that converge to a finite limit. A detailed account of the relevant literature is far from the scope of this work, we mention however \cite{ nelson1966quartic, feldman1974lambdaphi34} for constructions in $d=2$ and $d=3$ with small coupling strength (i.e. when the quartic term in \eqref{eq:action} is multiplied by a sufficiently small scalar).

In this work, we shall focus exclusively in the case of dimensions $d=1,2,3.$ Modern approaches to CQFT have recently succeeded in constructing the $\Phi^4_3$ measure at all coupling strengths and have been largely related to major advances in the study of singular Stochastic Partial Differential Equations (SPDEs) such as Hairer's novel theory of regularity structures \cite{hairer2014theory} and the paracontrolled calculus of Gubinelli-Imkeller-Perkowski \cite{GIP-paracontrolled}. This relation arises through the program of stochastic quantisation, originating in works of Nelson, Parisi and Wu, which has recently found applications to a wider range of physical theories (once again we refrain from exhaustive references and instead refer the reader to \cite{barashkov2022variational} for a more accurate historical account). In a nutshell, stochastic quantisation aims to define the measure $\mu$ as the invariant measure of an associated Langevin dynamics. In the case of $\Phi^4,$ the latter are governed by the gradient flow SPDE  (also known as the dynamic $\Phi^4$ model) which can be formally written as
\begin{equation}\label{eq:DynamicPhi4Intro}
    \partial_t \Phi =\Delta \Phi-m\Phi-\Phi^3+ ``\infty" \Phi+\xi.
\end{equation}
Here, $\xi$ is space-time white noise on the torus and $ ``\infty"\Phi$ represents a diverging counterterm necessary to renormalise the cube $\Phi^3.$

The complexity of solution theories for \eqref{eq:DynamicPhi4Intro} (and hence of constructions of \eqref{eq:phi4-density}) increases significantly from $d=2$ to $d=3.$ Classical work of Da Prato and Debussche \cite{da-prato-debussche} in $d=2$ shows that meaningful solutions can be constructed after replacing $\xi$ by a smooth approximation $\xi_n$ and reinterpreting $\Phi_n^3$ as a ``Wick-renormalised" cube $\Phi_n^{:3:}$ (see Section \ref{subsec:Wick} for precise definitions). After taking $n\to\infty,$ renormalised solutions $\Phi_n$ converge in probability to a limiting (random) Schwartz distribution $\Phi$ which is then defined to be the solution of \eqref{eq:DynamicPhi4Intro}. Wick renormalisation is also sufficient to construct solutions of the two-dimensional $P(\Phi)_2$ model \cite{Pphi2-book} which generalises $\Phi^4_2$ to higher order interactions by replacing the quartic term in \eqref{eq:action} with an even degree polynomial $P.$ Moreover, the corresponding measures can be expressed in terms of the Gaussian Free Field (GFF) $\mu_0$, defined as the (centered, Gaussian) invariant measure of the stochastic heat equation
\begin{equation}\label{eq:she-background}
    \partial_t\Phi_0 =(\Delta-m)\Phi_0+\xi.
\end{equation}

The case $d=3$ is more ``degenerate" in that it requires an additional ``mass renormalisation." This amounts to adding a logarithmically divergent counterterm in the quadratic term of \eqref{eq:action}. Local solutions of \eqref{eq:DynamicPhi4Intro} are then available within the frameworks of the aforementioned singular SPDE theories (see e.g. \cite{MH-phi4-rs} for the regularity structure approach) and the associated invariant measures are mainly understood as weak limit points of renormalised measures $\mu_n.$ Beyond this additional degeneracy, the $\Phi^4_3$ measure $\mu$ and GFF $\mu_0$ are mutually singular \cite{barashkov2021varphi,Hairer-phi4-note, hairer2024singularity} and hence the latter can no longer serve as a reference measure.

 In fact, even though these measures have been rigorously constructed through powerful limiting arguments, more intrinsic descriptions are desirable but typically absent from the literature. In this direction, Barashkov and Gubinelli proved a novel variational formula for the Laplace transform of the $\Phi^4_3$ and obtained a new construction based on Girsanov's theorem \cite{barashkov2020variational, barashkov2021varphi}. Nevertheless, it is, at the moment of writing, unclear whether $\mu$ possesses the desired ``density" $\frac{1}{Z}e^{-S(\phi)}$ in some appropriate sense.
 
 Our goal is to investigate this question for the $\Phi^4_d$ $(d=1,2,3),$ as well as $P(\Phi)_2,$ measures through the lens of the Onsager-Machlup (OM) functional, a notion of density that is well-defined for measures in infinite dimensional spaces. In particular, we compute (generalised versions of) the OM functionals for these measures and check whether they agree with this informal notion of density.  To the best of our knowledge, the question of determining OM functionals for these measures is considered here for the first time.

\subsection{Onsager-Machlup functional}\label{subsec:OMintro}

The Onsager-Machlup (OM) functional originates from statistical mechanics \cite{OM-Original} and is defined in analogy to Lebesgue's differentiation theorem. In absence of a translation invariant measure, the key idea is to compare a probability measure to translations of itself.

More precisely, we have that if $(X,d)$ is a metric space, $\mu$ a Borel probability measure and the limit 
\begin{equation}\label{eq:OMdefinition}
    \lim_{r\to 0^+}\frac{\mu(B_r(z_1))}{\mu(B_r(z_2))}=\exp\left(\operatorname{OM}(z_2)-\operatorname{OM}(z_1)\right),
\end{equation} 
where $B_r(z_i), i=1,2,$ is a  metric ball of radius $r$ centered at $z_i,$ 
exists for all  $z_1,z_2\in Z\subseteq X$, then $\operatorname{OM}$ is called the \emph{Onsager-Machlup functional} of $\mu$ on $Z$. It is straightforward to check that if $X=\R^d$ and $\mu$ has either a density or a mass function $f$, then its OM functional is given by the negative logarithm of $f$; this is precisely Lebesgue's differentiation theorem or continuity of measure. 

More importantly, OM functionals exist in infinite dimensions as well. For example, any measure that is equivalent to a Gaussian measure on a Banach space admits a (non-trivial) Onsager-Machlup functional, provided that the corresponding density satisfies certain continuity conditions, see \cite{Dashti}. This includes the case of diffusion laws, studied e.g. in \cite{Durr-Bach} (for $X=C([0,T];\R^d), Z=C^2$) and later extended, among others, by Zeitouni \cite{zeitouni1989onsager} to $Z=C^{1+\alpha}$ and Lyons-Zeitouni \cite{lyons1999conditional} to a large class of norms on path space. Beyond the setting of diffusions, a large number of works have studied the OM functional for different measures such as e.g. laws of non-anticipative SDEs \cite{carmona1992traces} or more recently SLE loop measures \cite{carfagnini2024onsager} to name only a few. In Section \ref{subsec:OMbackground} we provide further background and describe a few additional properties of OM functionals in connection to this work.

\subsection{Contributions}

One of the subtleties in computing OM functionals is their sensitivity to the choice of metric. Indeed, even equivalent metrics in $\R^d$ can lead to different OM functionals, see \cite[Example B.4.]{OM-Gamma-1}. Starting from this observation, we propose reasonable topologies for which the OM functional of the $\Phi^4-$measure coincides with the action $S$ \eqref{eq:action} in dimensions $1$ and $2$ but fails to exist in dimension $3$.  

Without loss of generality and throughout the rest of this work we shall consider the massless case $m=0.$ All the subsequent results continue to hold when $m>0$ since a mass term can always be inserted at the level of the Cameron-Martin norm of the GFF measure. Our contributions are summarized below and all the necessary notation, preliminaries as well as additional background on OM functionals are contained in Section \ref{sec:NotationBackground}.

\textbf{1.} In the case $d=1,$  where no renormalisation is required, we show that the OM functional of $\Phi^4_1$, defined in $X=C^{\alpha}(\mathbb{T})$ for any $\alpha\in[0, 1/2)$ exists and coincides with the action functional \eqref{eq:action} (when the latter is extended to $C^\alpha$  by the value $\infty$ on $C^\alpha\setminus H^1$). More precisely, we show:     \begin{theorem}\label{thm:1d-warmup}
   Let $\alpha\in[0, 1/2),$ $\mu$ be the $\Phi_1^4$ measure supported on $C^{\alpha}(\T)$. For any $z_1,z_2\in H_0^1(\mathbb T^1)$ we have
   \begin{equation}\label{thm:main-1d}
        \lim_{r\to 0^+}\frac{\mu(B^1_r(z_1))}{\mu(B^1_r(z_2))}=\exp\left(S(z_2)-S(z_1)\right),
   \end{equation}
where $B^1_r(z_i),$ $i=1,2,$ are norm-open balls of radius $r>0$ and centered at $z_i.$ In particular,
\begin{equation*}
        \operatorname{OM}(z)\equiv S(z)=
        \begin{cases}
            \int_{\mathbb T^1} \frac{1}{4}z^4(x) dx+\frac{1}{2}\|z\|_{H_0^1}^2 &\text{ if } z\in H_0^1(\mathbb T^1)\\
            \infty &\text{ else.}
        \end{cases}
    \end{equation*}
\end{theorem}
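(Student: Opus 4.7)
The approach is to exploit the absolute continuity of $\mu$ with respect to the massless GFF $\mu_0$ and reduce to (i) the standard OM functional of the Gaussian reference measure on $C^\alpha(\T^1)$ and (ii) a uniform continuity estimate for the quartic potential. Since no renormalisation is needed in $d=1$, one has the Feynman-Kac type representation
\begin{equation*}
\frac{d\mu}{d\mu_0}(\phi)=\frac{1}{Z}\exp\bigl(-V(\phi)\bigr),\qquad V(\phi)=\tfrac14\int_{\T^1}\phi(x)^4\,dx,
\end{equation*}
where $V$ is well-defined and locally Lipschitz on $C^\alpha(\T^1)$ because of the embeddings $H^1_0\hookrightarrow C^{1/2}\hookrightarrow C^\alpha\hookrightarrow L^\infty$ for $\alpha\in[0,1/2)$.

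The first step is to invoke the Gaussian OM functional for $\mu_0$ on $C^\alpha(\T^1)$: for every $z\in H^1_0(\T^1)$,
\begin{equation*}
\lim_{r\to 0^+}\frac{\mu_0(B^1_r(z))}{\mu_0(B^1_r(0))}=\exp\!\left(-\tfrac12\|z\|^2_{H^1_0}\right).
\end{equation*}
This is a classical consequence of the Cameron-Martin theorem for centered Gaussian measures on separable Banach spaces whose unit ball is "natural" in the sense that the Cameron-Martin unit ball is relatively compact in it; the GFF on $\T^1$ is well known to satisfy this in $C^\alpha$ for $\alpha<1/2$, as covered for instance in Bogachev's treatment of Gaussian measures or in the analysis of the Wiener/Brownian bridge measure. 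I would cite this input rather than reprove it, since Section \ref{subsec:OMbackground} is meant to collect such background.

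The second step is the potential sandwich. For $z\in H^1_0$ and $\phi\in B^1_r(z)$ one has $\|\phi\|_\infty\leq \|z\|_\infty+Cr$, and telescoping $\phi^4-z^4$ gives
\begin{equation*}
\sup_{\phi\in B^1_r(z)}|V(\phi)-V(z)|\leq C(\|z\|_\infty)\,r\xrightarrow{r\to 0^+}0.
\end{equation*}
Combining this with the Gaussian step, for each $i=1,2$,
\begin{equation*}
\mu(B^1_r(z_i))=\frac{1}{Z}\int_{B^1_r(z_i)}e^{-V(\phi)}\,d\mu_0(\phi)=\frac{e^{-V(z_i)}}{Z}\bigl(1+O(r)\bigr)\,\mu_0(B^1_r(z_i)),
\end{equation*}
and taking ratios yields $\exp(V(z_2)-V(z_1))\exp\bigl(\tfrac12\|z_2\|^2_{H^1_0}-\tfrac12\|z_1\|^2_{H^1_0}\bigr)=\exp(S(z_2)-S(z_1))$.

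The statement that $\operatorname{OM}(z)=+\infty$ for $z\notin H^1_0$ follows from the Gaussian dichotomy: for $z\in C^\alpha\setminus H^1_0$ the ratio $\mu_0(B^1_r(z))/\mu_0(B^1_r(z_2))$ tends to $0$ as $r\to 0$ for any fixed $z_2\in H^1_0$, which, together with the locally uniform bound on $V$, transfers to $\mu$ via the same sandwich argument. The main technical obstacle is therefore not the polynomial part—which is elementary in $d=1$—but the careful choice (or citation) of a version of the Gaussian small-ball/OM result valid in the specific Hölder topology used here; once this ingredient is in place, the rest is a soft continuity argument that cleanly breaks down in the absence of $L^\infty$ embeddings, foreshadowing the difficulties in $d\geq 2$.
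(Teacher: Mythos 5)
Your argument is correct, but it takes a genuinely different route from the paper's. You factor $\mu=\tfrac{1}{Z}e^{-V}\mu_0$ with $V(\phi)=\tfrac14\int_{\T}\phi^4$, freeze $V$ on each small ball via the sandwich $\sup_{\phi\in B^1_r(z)}|V(\phi)-V(z)|\lesssim_z r$, and then quote the Gaussian small-ball ratio for $\mu_0$ in $C^\alpha(\T)$; in effect you are specializing the paper's own Proposition \ref{prop:GM} to this locally Lipschitz potential, and the value $+\infty$ off $H^1_0$ likewise comes from that Gaussian theory. The paper instead never invokes a limit theorem for $\mu_0$: it applies the Cameron--Martin theorem to recentre \emph{both} balls at $0$, expands $(\phi-z_i)^4$ binomially, bounds the cross terms and the shift functional by $O(r)$ on $B^1_r(0)$, and lets the identical remaining integral $\int_{B^1_r(0)}\exp\big(-\tfrac14\int_{\T}\phi^4\big)\,d\mu_0$ cancel exactly in the ratio. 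Your route is shorter and cleanly isolates the Gaussian input (which also absorbs the subtlety that for general $z\in H^1_0$ the Cameron--Martin functional $\hat z$ is only a $\mu_0$-measurable linear functional rather than an element of $(C^{\alpha})^{*}$, a point the paper's proof passes over by writing $|z_i^{*}(\phi)|\lesssim r$); the paper's computation buys self-containedness and is precisely the template reused in $d=2$, where the renormalised density is not ball-continuous and one must recentre and expand against enhanced balls instead of appealing to a Gaussian OM theorem. One shared caveat: at $\alpha=0$ the Besov space $C^{0}=B^{0}_{\infty,\infty}$ does not embed into $L^\infty$, so your bound $\|\phi\|_{\infty}\le \|z\|_{\infty}+Cr$ (like the corresponding step in the paper) really requires $\alpha>0$ or the interpretation of $C^{0}$ as $C(\T)$ with the supremum norm, which is what the paper intends.
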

\noindent The proof of Theorem \ref{thm:main-1d} can be found in Section \ref{sec:d=1}.

\textbf{2.} In $d=2,$ we let  $X=C^{-\alpha}, \alpha>0$ be a H\"older-Besov space on which the GFF $\mu_0$ is concentrated and consider the $P(\Phi)_2$ measure on $X$ (which includes the $\Phi^4_2$ measure as a special case). Even though the latter is absolutely continuous with respect to $\mu_0,$ its Wick-renormalised density $Z^{-1}\exp(-:P:(\Phi))$ is not continuous in $\Phi.$ Nevertheless, we are able to reclaim continuity by incorporating higher order terms in the underlying topology; an idea that appears naturally in both rough analysis and in the aforementioned constructions of $\Phi^4_2$. In particular, we consider a generalised OM functional in which the balls $B_r$ \eqref{eq:OMdefinition} are replaced by ``enhanced" measurable sets $\mathcal{B}_r(z)$ chosen so that a) they shrink to a point $z$ as $r\to 0$ and b) allow control over Wick powers $\Phi^{:p:}, p=1,\dots,k-1,$ where $k$ is the degree of $P.$ Then, we are once again able to show that this (generalised) OM functional agrees with the action \eqref{eq:action}:

   \begin{theorem}\label{thm:main-2d}
    Let $r,\kappa>0, \alpha>0,$  $z_i\in C^{\alpha+\kappa}(\mathbb T^2)\cap H_0^1(\mathbb T^2), i=1,2,$ and $P:\R\rightarrow\R$ be a polynomial of even degree $2k\in\N$ with strictly positive leading coefficient. Furthermore, consider the ``enhanced" sets    
    \begin{equation}\label{eq:2d-ball}
    \B^2_r(z_i):=\{\phi\in C^{-\alpha}(\mathbb T^2): \|\phi-z_i\|_{C^{-\alpha}},\|(\phi-z_i)^{:2:}\|_{C^{-\alpha}}<r,\|(\phi-z_i)^{:3:}\|_{C^{-\alpha}}<r\},
    \end{equation}
     \begin{equation}\label{eq:enhanced-ball-2d-P}
        \B^P_r(z):=\{\phi\in C^{-\alpha}(\mathbb T^2): \|\phi-z\|_{C^{-\alpha}}<r,...,\|(\phi-z)^{:2k-1:}\|_{C^{-\alpha}}<r\},
    \end{equation} 
    $r>0.$ The following hold:

    \begin{itemize}
        \item[(1)]  Let $\mu$ be the $\Phi_2^4$ measure and $S$ as in \eqref{eq:action}. Then
    \begin{equation*}
        \lim_{r\to 0^+}\frac{\mu(\B^2_r(z_1))}{\mu(\B^2_r(z_2))}=\exp\left(S(z_2)-S(z_1)\right).
    \end{equation*}
In other words, the OM functional of $\mu$ on $C^{\alpha+\kappa}(\mathbb T^2)\cap H_0^1(\mathbb T^2)$ is 
 given by $S.$
\item[(2)] Let $\mu$ be the $P(\Phi)_2$ measure and $S^P$ the action functional 
\begin{equation}\label{eq:PphiAction}
    S_P(\phi)=\int_{\mathbb T^2}\left(P(\phi(x))+\frac{1}{2}|\nabla \phi(x)|^2\right)dx.
\end{equation}
Then
    \begin{equation*}
        \lim_{r\to 0^+}\frac{\mu(\B^P_r(z_1))}{\mu(\B^P_r(z_2))}=\exp\left(S_P(z_2)-S_P(z_1)\right).
    \end{equation*}
In other words, the OM functional of $\mu$ on $C^{\alpha+\kappa}(\mathbb T^2)\cap H_0^1(\mathbb T^2)$ is given by $S_P.$
    \end{itemize}   
\end{theorem}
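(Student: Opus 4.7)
\textbf{Proof plan for Theorem \ref{thm:main-2d}.} The strategy is to combine the absolute continuity $d\mu=Z^{-1}\exp(-\int_{\T^2}:P:(\phi)(x)\,dx)\,d\mu_0$ with a Cameron--Martin translation of the underlying GFF $\mu_0$ by $z\in H_0^1\cap C^{\alpha+\kappa}.$ Substituting $\psi=\phi-z,$ the translation invariance built into the enhanced balls \eqref{eq:2d-ball}--\eqref{eq:enhanced-ball-2d-P} gives $\phi\in\B^P_r(z)$ iff $\psi\in\B^P_r(0),$ and the Cameron--Martin formula produces
\begin{equation*}
\mu(\B^P_r(z))=Z^{-1}e^{-\frac{1}{2}\|z\|_{H_0^1}^2}\int_{\B^P_r(0)}\exp\left(-\int_{\T^2}:P:(\psi+z)(x)\,dx+W(z)(\psi)\right)d\mu_0(\psi),
\end{equation*}
where $W(z)$ denotes the Paley--Wiener (first Wiener chaos) random variable associated with $z.$ The decisive gain is that the domain of integration no longer depends on the center.

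The next step is to apply the Wick binomial identity $:(\psi+z)^p:=\sum_{j=0}^{p}\binom{p}{j}z^{j}:\psi^{p-j}:$ term by term in $P$ to decompose
\begin{equation*}
\int_{\T^2}:P:(\psi+z)(x)\,dx=\int_{\T^2}:P:(\psi)(x)\,dx+\int_{\T^2}P(z)(x)\,dx+R(\psi,z),
\end{equation*}
where $R(\psi,z):=\sum_p a_p\sum_{j=1}^{p-1}\binom{p}{j}\int_{\T^2}z^j(x):\psi^{p-j}:(x)\,dx$ collects the genuine cross-terms. The pure-$\psi$ piece $\int:P:(\psi)\,dx,$ which contains in particular the uncontrolled top Wick power $:\psi^{2k}:,$ appears identically in both numerator and denominator. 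Recombining $Z^{-1}\exp(-\int:P:(\psi)\,dx)\,d\mu_0=d\mu$ and recognising $\int P(z)\,dx+\frac{1}{2}\|z\|_{H_0^1}^2=S_P(z),$ the ratio reduces to
\begin{equation*}
\frac{\mu(\B^P_r(z_1))}{\mu(\B^P_r(z_2))}=e^{S_P(z_2)-S_P(z_1)}\cdot\frac{\int_{\B^P_r(0)}\exp(-R(\psi,z_1)+W(z_1)(\psi))\,d\mu(\psi)}{\int_{\B^P_r(0)}\exp(-R(\psi,z_2)+W(z_2)(\psi))\,d\mu(\psi)}.
\end{equation*}

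It therefore remains to show that the quotient of integrals on the right converges to $1$ as $r\to 0^+,$ for which it suffices to prove that $R(\psi,z)$ and $W(z)(\psi)$ vanish \emph{uniformly} over $\psi\in\B^P_r(0)$ as $r\to 0.$ Each cross-term is controlled via Besov duality: since $C^{\alpha+\kappa}(\T^2)$ is a multiplicative algebra, $z^j\in C^{\alpha+\kappa}$ and for $1\leq p-j\leq 2k-1$ one has
\begin{equation*}
\left|\int_{\T^2}z^j(x):\psi^{p-j}:(x)\,dx\right|\leq C\,\|z\|_{C^{\alpha+\kappa}}^{j}\,\|:\psi^{p-j}:\|_{C^{-\alpha}}\leq C(z)r.
\end{equation*}
The Paley--Wiener term is bounded by $|W(z)(\psi)|\leq\|z\|_{H_0^1}\|\psi\|_{H^{-1}}\lesssim\|z\|_{H_0^1}\|\psi\|_{C^{-\alpha}}\leq C(z)r,$ via Hilbert--space duality and the continuous embedding $C^{-\alpha}(\T^2)\hookrightarrow H^{-1}(\T^2).$ Uniform convergence of both exponentials to $1$ on $\B^P_r(0),$ together with dominated convergence, then yields the claim.

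The principal technical obstacle is precisely this uniform control of $R$ and $W.$ The design of $\B^P_r(z)$ is calibrated to the degree $2k$ of $P$: retaining control of all Wick powers up to order $2k-1$ ensures that every surviving cross-term in the binomial expansion is a continuous functional of the enhanced data and vanishes with $r,$ while the top Wick power $:\psi^{2k}:,$ which cannot be controlled pathwise, is isolated in the pure-$\psi$ piece that cancels in the ratio; the positive gap $\kappa>0$ is exactly what makes the Besov pairings of $z^j$ with negative-order Wick powers well-defined. A secondary point is non-degeneracy: one must check $\mu(\B^P_r(0))>0$ for every $r>0,$ which follows from the origin lying in the support of the joint law of the Wick tower $(\Phi,:\Phi^2:,\ldots,:\Phi^{2k-1}:)$ under $\mu_0.$ Item (1) of the theorem is the special case $P(u)=u^4/4,$ $2k=4,$ for which \eqref{eq:2d-ball} and \eqref{eq:enhanced-ball-2d-P} coincide, so both parts follow from the same argument.
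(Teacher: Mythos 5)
Your proposal follows essentially the same route as the paper's proof: a Cameron--Martin translation that recenters the enhanced ball at the origin, the Wick binomial expansion (Lemma \ref{lemma:binomial}), duality estimates (Lemma \ref{lem:dualityBound}) showing all cross terms are $O(r)$ uniformly on $\B^P_r(0)$, and cancellation of the pure top Wick power between numerator and denominator. The only (minor) differences are that you treat general $P$ directly rather than reducing to the quartic case, bound the Paley--Wiener term via the embedding $C^{-\alpha}\hookrightarrow H^{-1}$ (which requires $\alpha\le 1$) instead of the paper's direct duality claim for $z^\ast$, and explicitly record the positivity $\mu(\B^P_r(0))>0$ that the paper leaves implicit (cf. Remark \ref{rem:support}).
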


The proof of Theorem \ref{thm:main-2d} can be found in Section \ref{sec:d=2}. We highlight here, once again, the rough analysis flavor of Theorem \ref{thm:main-2d}: In the case of differential equations driven by (random) rough signals, one obtains continuity of the It\^o-Lyons solution map by enhancing the driving noise with higher order iterated integrals into a (random) rough path. Similarly, continuity of the  $\Phi^4_2$ and $P(\Phi)_2$ densities is regained by considering the enhanced ``balls" \eqref{eq:2d-ball}, \eqref{eq:enhanced-ball-2d-P}. The role of iterated integrals of paths is played here by Wick powers and this enhancement is necessary to gain control over small ball estimates (see also Remark \ref{rem:support} below).

\textbf{3.} In $d=3$ we are faced with both the mutual singularity of $\mu_0, \mu$ but also the fact that the Wick cube $\Phi_n^{:3:}$ of the Fourier-truncated GFF diverges logarithmically as $n\to\infty.$ In complete analogy to our approach in $d=2,$ we start from a sequence of renormalised measures $\{\mu_n\}_{n\in\N}$ on $X=C^{-1/2-\kappa}(\T^3), \kappa>0,$ converging to the $\Phi^4_3$ measure. Then we consider enhanced sets $\mathcal{B}^3_r(z)$ that allow control over the Wick powers $\Phi_n^{:p:}, p=1,2, 3$ but also incorporate the logarithmic divergence of $\Phi_n^{:3:}.$ In sharp contrast to the previous case, we show that this time the corresponding OM functional is infinite for all smooth $z_1\neq 0$ and $z_2=0$  in $X:$

\begin{theorem}\label{theorem:main-3d-full}
    Let $\kappa>0, z\in C^\infty(\mathbb T^3), \mu$ be the $\Phi^4_3$ measure and consider the sets     \begin{equation}\label{eq:3d-ball}
    \begin{split}
    \mathcal{B}^3_r(z):=\big\{\phi\in C^{-1/2-\kappa}(\mathbb T^3): \forall k\in\N,\; \|(\phi-z)_k\|_{C^{-1/2-\kappa}}<r, &\|(\phi-z)_k^{:2:}\|_{C^{-1-\kappa}}<r,\\
    \|(\phi-z)_k^{:3:}\|_{C^{-3/2-\kappa}}<r\log k\big\}\;,\;r>0.
    \end{split}
\end{equation} If $\mu(\B^3_r(0))>0$ for all $r>0,$ then the Onsager-Machlup functional for the $\Phi_3^4$ measure, with respect to the sets $\mathcal{B}^3_r$ is given, up to an additive constant, by 
    \begin{equation}\label{eq:OM3d}
        \operatorname{OM}(z):=\begin{cases}
            \infty &\text{ if }z\neq 0\\
            0 &\text{ if }z=0.
        \end{cases}
    \end{equation}
    In particular, the corresponding  $\operatorname{OM}$ functional is either degenerate or not well-defined.
\end{theorem}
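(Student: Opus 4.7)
My approach parallels the proof of Theorem~\ref{thm:main-2d} at the level of the renormalised approximations of $\mu$, but reveals that the mass counterterm required in $d=3$ produces a logarithmically divergent contribution that cannot be absorbed by the small-ball constraints defining $\mathcal{B}^3_r(0)$.

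Concretely, let $\mu_n$ denote the Fourier-truncated renormalised $\Phi^4_3$ measures, with density $d\mu_n/d\mu_0=Z_n^{-1}e^{-V_n(\phi)}$ relative to the GFF, where $V_n(\phi)=\int_{\mathbb T^3}(\tfrac14\phi_n^{:4:}+\alpha_n\phi_n^{:2:})\,dx$ up to a constant, $\alpha_n\asymp\log n$ is the standard mass counterterm, and the coefficients are chosen so that $\mu_n\Rightarrow\mu$. A preliminary continuity-set argument gives $\mu_n(\mathcal{B}^3_r(\cdot))\to\mu(\mathcal{B}^3_r(\cdot))$ and reduces the problem to computing $\lim_{n\to\infty}\mu_n(\mathcal{B}^3_r(z))/\mu_n(\mathcal{B}^3_r(0))$ followed by $r\to 0$. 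A Cameron-Martin shift $\phi=\psi+z$ inside the GFF factor yields, with $\Delta V_n:=V_n(\psi+z)-V_n(\psi)$,
\begin{equation*}
\frac{\mu_n(\mathcal{B}^3_r(z))}{\mu_n(\mathcal{B}^3_r(0))}=e^{-\tfrac12\|z\|^2_{H^1}}\,\mathbb{E}_{\mu_n}\!\bigl[e^{-\Delta V_n-\langle z,\psi\rangle_{H^1}}\,\big|\,\psi\in\mathcal{B}^3_r(0)\bigr],
\end{equation*}
and the binomial identity $(\psi+z)^{:p:}_n=\sum_j\binom{p}{j}\psi^{:j:}_n z^{p-j}$ gives
\begin{equation*}
\Delta V_n=\!\int\! z\,\psi^{:3:}_n+\tfrac{3}{2}\!\int\! z^2\psi^{:2:}_n+\!\int\! z^3\psi_n+\tfrac14\!\int\! z^4+\alpha_n\!\int(2z\psi_n+z^2)\,dx.
\end{equation*}

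The crux is to identify an obstructive divergence. The constraints built into $\mathcal{B}^3_r(0)$ imply by duality that each \emph{random} term in the exponent above is bounded by $C(z)\,r\log n$ --- in the worst case $|\int z\psi^{:3:}_n|\leq \|z\|_{C^{3/2+\kappa}}r\log n$ and $|2\alpha_n\int z\psi_n|\leq 2\alpha_n\|z\|_{C^{1/2+\kappa}}r\asymp r\log n$ --- whereas the \emph{deterministic} contribution $\pm\alpha_n\int z^2\asymp \log n\cdot\int z^2$ has coefficient $\int z^2>0$ independent of $r$. With the sign dictated by the construction of $\mu$, for $r$ strictly below a threshold $r_0(z)>0$ the deterministic divergence outweighs the worst-case random contribution, so the exponent is bounded above by $-\gamma(z,r)\log n$ for some $\gamma>0$, and
\begin{equation*}
\mu_n(\mathcal{B}^3_r(z))/\mu_n(\mathcal{B}^3_r(0))\leq C\,n^{-\gamma}\longrightarrow 0\quad\text{as }n\to\infty.
\end{equation*}
Passing to the limit forces $\mu(\mathcal{B}^3_r(z))=0$ for every $r<r_0(z)$, whence $\lim_{r\to 0^+}\mu(\mathcal{B}^3_r(z))/\mu(\mathcal{B}^3_r(0))=0$ for every $z\in C^\infty(\mathbb T^3)\setminus\{0\}$, giving $\operatorname{OM}(z)-\operatorname{OM}(0)=+\infty$ (while $\operatorname{OM}(0)=0$ is trivial).

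The main obstacle I anticipate is making this dominance step rigorous: the naive worst-case bound for the random part of the exponent is comparable in magnitude to the deterministic divergence, so in principle a Girsanov-type cancellation between the two could produce a finite limit. To rule this out, the pointwise estimates should be strengthened to hypercontractive moment bounds for the Wick polynomials on $\mathcal{B}^3_r(0)$, exploiting the absolute continuity of $\mu_n$ with respect to the GFF, or alternatively by appealing to the Bou\'e--Dupuis/Barashkov--Gubinelli variational representation of the conditional partition functions. A secondary technical point is the exchange of the limits $n\to\infty$ and $r\to 0$, which is monotone in $r$ and should cause no real difficulty.
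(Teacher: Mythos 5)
Your core mechanism is exactly the one the paper uses (Proposition \ref{prop:3d-meas0}): Cameron--Martin shift, Wick binomial expansion, pathwise duality bounds on the enhanced ball ($|\langle z,\psi_n^{:3:}\rangle|\lesssim r\log n$, $|\alpha_n\langle z,\psi_n\rangle|\lesssim r\log n$, the remaining random terms $O(r)$), and dominance of the deterministic divergence $\alpha_n\int z_n^2\asymp \log n\,\|z\|_{L^2}^2$ once $r<r_0(z)$, yielding $\mu_n(\mathcal{B}^3_r(z))\leq C n^{-\gamma}$ and hence a degenerate ratio. Your closing worry about a possible Girsanov-type cancellation is unfounded: on the event $\mathcal{B}^3_r(0)$ the random part of the exponent is bounded \emph{deterministically} by $K(z)\,r\log n$ with $K(z)$ independent of $n$ and $r$, so choosing $r<r_0(z)$ (depending only on $K(z)$ and $\|z\|_{L^2}$) makes the estimate pathwise; no hypercontractive moment bounds or Bou\'e--Dupuis input is needed, and the paper argues precisely this way. (Minor points: the bounds should be phrased for $z_n$ rather than $z$, with uniformity in $n$ supplied by Lemma \ref{lem:FourierConvergence} for smooth $z$; and the hypothesis $\mu(\mathcal{B}^3_r(0))>0$ is what makes the ratio, and hence the dichotomy in the statement, meaningful.)

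The genuine gap is the step you dismiss as ``a preliminary continuity-set argument gives $\mu_n(\mathcal{B}^3_r(\cdot))\to\mu(\mathcal{B}^3_r(\cdot))$''. This is needed to pass from $\mu_n(\mathcal{B}^3_r(z))\leq Cn^{-\gamma}$ to $\mu(\mathcal{B}^3_r(z))=0$, since $\mu$ is only known as a weak limit of the $\mu_n$, and it is not routine: $\mathcal{B}^3_r(z)$ is a countable intersection of open constraints (one for each $n$), hence neither open nor closed, so the Portmanteau inequalities do not apply to it directly; and truncating the intersection to levels $n\leq N$ to obtain an open set destroys exactly the constraint at level $n$ that your divergence estimate exploits when bounding $\mu_n$. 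One must prove $\mu(\partial\mathcal{B}^3_r(z))=0$. In the paper this is Lemma \ref{lem:BallMeasureLimit}, whose proof is the technically substantial part of the argument: it reduces to boundaries of sets depending on finitely many Fourier modes, uses absolute continuity of the corresponding marginals with respect to the GFF, and then shows via Malliavin calculus (absolute continuity of suprema of Wiener-chaos random variables, \cite[Proposition 2.1.11]{nualart2006malliavin}) that the laws of $\|\phi_N^{:2:}\|_{C^{-1-\kappa}}$ and $\|\phi_N^{:3:}\|_{C^{-3/2-\kappa}}$ are atomless, so the relevant ``spheres'' are null. Without this lemma, or a substitute for it, your limit passage $n\to\infty$ at fixed $r$ is unjustified, and the rest of the proof does not close.
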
   
    
    The proof of Theorem \ref{theorem:main-3d-full} can be found in Section \ref{sec:d=3}. Despite this degeneracy result, we are still able to recover the action $S$ by considering pairs  $z_1, z_2\neq 0$ that satisfy some further compatibility conditions and considering a joint limiting  regime 
    $n=n(r)\rightarrow\infty$ as $r\to 0$ with $\lim_{r\to 0^+} \log n(r)r=0$ (Proposition \eqref{prop:recover-1}). Finally, we compute a ``generalised" small ball ratio over multiple balls and show that it converges, in the same joint asymptotic regime, to a finite expression involving $S$ for all smooth $z_1, z_2.$ These computations are presented in Section \ref{sec:recover}. Before we move on, let us emphasize here that Theorem \ref{theorem:main-3d-full} does not preclude the existence of a generalised notion of density for $\Phi^4_3;$ rather, it highlights that a natural candidate for the latter is degenerate. A more detailed discussion on our results and potential future directions is postponed to Section \ref{sec:Conclusion}.

\section*{Acknowledgments}
The authors would like to thank Ilya Chevyrev for several helpful discussions.

\section{Notations and preliminaries}\label{sec:NotationBackground}

\begin{notation}\label{notation:background} Throughout this work $\mathbb T^d$ is the $d$-torus for $d=1,2,3$.  For $\alpha\in\R,$ $C^{\alpha}(\mathbb T^d)$ is the H\"older-Besov space of distributions defined as the completion of $C^\infty(\T^d)$ under the norm 
    $$\|f\|_{C^\alpha}:=\sup_{j\geq -1} 2^{j\alpha}\|\Delta_jf\|_{L^\infty(\T^d)},    $$
    where $\{\Delta_jf\}_{j\geq-1}$ are the  Paley-Littlewood projectors of $f$ (see e.g. \cite[Chapter 2]{bahouri2011fourier} for definitions and details). When $\alpha>0$ the latter is isometric to the space of H\"older continuous functions. The duality pairing between a distribution $f\in C^\alpha(\T^d)$ and a smooth function $\phi\in C^\infty(\T^d)$ is denoted by $\langle f,\phi\rangle.$ 
    
   For $\alpha\in\R, f\in C^\alpha(\T^d)$ and a multi-index $k=(k_1,\dots,k_d)\in\Z^d,$  $\hat{f}_k$ is the $k-$th Fourier coefficient of $f.$ For $N\in\N$ we also use the notation $f_N=\sum_{|k|\leq N}\hat{f}_ke_k,$ where $\{e_k\}_{k\in\Z^d}\subset L^2(\T^d)$ is an orthonormal Fourier basis and $|k|:=k_1+\ldots+k_d.$ $H^1_0(\T^d)$ is the Sobolev space of mean-zero $H^1$ functions $f$ endowed with the norm $\|f\|_{H^1_0}=\|\nabla f\|_{L^2}.$
   
    For any $\kappa>0,$ $\mu_0$ is the law Gaussian Free Field (GFF) and $\mu$ is the $\Phi_d^4$ measure on $C^{1-d/2-\kappa}(\T^d).$ The former is defined as the centered Gaussian measure with covariance operator $Q=(-\Delta)^{-1}$ and Cameron-Martin space $H^1_0(\T^d).$
    
    A smooth function $f\in C(\T^d)\equiv C^0(\T^d)$ is said to be of finite spectral support if there exists $N\in\N$ such that $\hat{f}_k=0$ for all $k\in\Z^d$ with $|k|\geq N.$
    
     The continuous dual of a Banach space $X$ is denoted by $X^*.$ For a metric space $X, B_r(z)$ is the open ball of radius $r>0$ centered at $z\in X.$
     Finally, for $x, y\in\R$ we write $x\lesssim y$ to denote that $x\leq C y$ up to an (unimportant) constant $C>0.$
\end{notation} 

\begin{lemma}[Duality estimate in H\"older-Besov spaces]\label{lem:dualityBound} For any $\alpha\in\R, \delta>0$ and distributions $f\in C^{\alpha}(\mathbb T^d), g\in C^{-\alpha+\delta}(\mathbb T^d)$ we have $\langle f,g\rangle\lesssim \|f\|_{C^\alpha}\|g\|_{C^{-\alpha+\delta}}.$    
\end{lemma}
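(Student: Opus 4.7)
The plan is to exploit the Paley-Littlewood characterisation of the H\"older-Besov norms together with spectral orthogonality, interpreting the duality pairing through the resulting absolutely convergent dyadic expansion. To begin, I would write, at least formally,
\begin{equation*}
\langle f,g\rangle = \sum_{j,k\geq -1}\langle \Delta_j f, \Delta_k g\rangle,
\end{equation*}
using $f=\sum_j\Delta_j f$ and $g=\sum_k \Delta_k g$. Since each $\Delta_j f$ has Fourier support in a dyadic annulus of scale $2^j$ (and similarly for $\Delta_k g$), the inner pairing vanishes unless $|j-k|\leq N_0$ for some fixed $N_0$ depending only on the chosen partition of unity. This collapses the double sum to a near-diagonal one, which is the structural input that drives everything else.

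Next, for each surviving term I would invoke Cauchy-Schwarz on $L^2(\T^d)$ followed by the trivial embedding $L^\infty(\T^d)\hookrightarrow L^2(\T^d)$ available on the compact torus, yielding
\begin{equation*}
|\langle \Delta_j f,\Delta_k g\rangle|\;\leq\;\|\Delta_j f\|_{L^2}\|\Delta_k g\|_{L^2}\;\lesssim\;\|\Delta_j f\|_{L^\infty}\|\Delta_k g\|_{L^\infty}.
\end{equation*}
By the very definition of the norms in Notation \ref{notation:background}, $\|\Delta_j f\|_{L^\infty}\leq 2^{-j\alpha}\|f\|_{C^\alpha}$ and $\|\Delta_k g\|_{L^\infty}\leq 2^{-k(-\alpha+\delta)}\|g\|_{C^{-\alpha+\delta}}$. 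Restricting to $|j-k|\leq N_0$ and collecting the exponents produces, up to a universal constant, a factor of $2^{-j\delta}\|f\|_{C^\alpha}\|g\|_{C^{-\alpha+\delta}}$. Summing the geometric series $\sum_{j\geq -1}2^{-j\delta}$, which converges precisely because $\delta>0$, delivers the claimed bound.

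The only real subtlety is an interpretational one: the pairing between two distributions is not canonically defined a priori. The argument above should therefore be read as simultaneously \emph{defining} $\langle f,g\rangle$ via the absolutely convergent (near-diagonal) dyadic series and \emph{estimating} it; consistency with the classical pairing when one of the two arguments is smooth follows by density, since smooth functions are norm-dense in $C^{-\alpha+\delta}$ for $\delta>0$ in the weaker $C^{-\alpha}$ topology. The strict positivity of $\delta$ is essential: it is exactly the summability threshold for the dyadic series, mirroring the familiar sharpness of the multiplicative inequality $C^\alpha\cdot C^\beta\hookrightarrow \mathcal{D}'$ which requires $\alpha+\beta>0$.
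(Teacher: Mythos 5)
Your argument is correct, but it takes a different route from the paper. The paper's proof is essentially a two-line citation: it invokes the Besov duality estimate of \cite[Proposition 2.76]{bahouri2011fourier} for the pairing $B^{\alpha}_{\infty,\infty}\times B^{-\alpha}_{1,1}$, and then uses compactness of the torus together with the embeddings $B^{-\alpha+\delta}_{\infty,\infty}\hookrightarrow B^{-\alpha+\delta}_{1,\infty}\hookrightarrow B^{-\alpha}_{1,1}$ (the last one is where $\delta>0$ enters, buying the summability in the third index). You instead unpack the mechanism by hand: the near-diagonal orthogonality of Littlewood--Paley blocks, a blockwise H\"older-type bound (your detour through $L^2$ Cauchy--Schwarz and $L^\infty\hookrightarrow L^2$ on the finite-measure torus is harmless, though you could pair $L^\infty$ with $L^1$ directly), and geometric summability of $\sum_j 2^{-j\delta}$, which is exactly where $\delta>0$ is used in your version. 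The two proofs use the same two structural inputs (compactness of $\T^d$ and the strict gain $\delta$), but yours is self-contained and makes visible why the constant degrades as $\delta\to 0$, while the paper's is shorter at the cost of quoting the duality proposition. You are also right to flag that the pairing of two distributions needs a definition; note that with the paper's convention that $C^{\beta}$ is the completion of $C^\infty$ under the Besov norm, smooth functions are norm-dense in $C^{-\alpha+\delta}$ itself, so your consistency-by-density step can be run in the norm topology rather than the weaker $C^{-\alpha}$ one, and your absolutely convergent dyadic series then coincides with the continuous extension of the classical pairing.
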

\begin{proof} By \cite[Proposition 2.76]{bahouri2011fourier}, compactness of the torus and the continuous Besov space embeddings $C^{-\alpha+\delta}(\mathbb T^d):=B_{1,\infty}^{-\alpha+\delta}(\mathbb T^d)\hookrightarrow B_{1,\infty}^{-\alpha+\delta}\hookrightarrow B_{1,1}^{-\alpha}(\mathbb T^d)$ (see e.g. \cite[Corollary 2.96]{bahouri2011fourier}) we get
 $\langle f,g\rangle\lesssim \|f\|_{C^\alpha}\|g\|_{B^{-\alpha}_{1,1}}\lesssim \|f\|_{C^\alpha}\|g\|_{C^{-\alpha+\delta}}.$ \end{proof}

\begin{lemma}[Uniform convergence of Fourier series]\label{lem:FourierConvergence} Let $d,k\in\N$ and $f\in C^{k+d/2}(\T^d).$ Then $\|f_n-f\|_{C^k(\T^d)}\longrightarrow 0$ as $n\to\infty.$
\end{lemma}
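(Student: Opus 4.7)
The plan is to estimate the H\"older-Besov norm $\|f-f_n\|_{C^k(\T^d)}$ via the Paley-Littlewood decomposition, splitting the supremum over dyadic blocks into three regimes according to how each block's Fourier support intersects the truncation region $\{|k|\leq n\}$.

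The regularity assumption $f\in C^{k+d/2}(\T^d)$ yields, by the Besov characterization recalled in Notation \ref{notation:background}, the uniform bound
\begin{equation*}
\|\Delta_j f\|_{L^\infty(\T^d)}\lesssim 2^{-j(k+d/2)}\|f\|_{C^{k+d/2}},\qquad j\geq -1.
\end{equation*}
Setting $j_\star:=\lceil \log_2 n\rceil$ and using that each $\Delta_j$ is a Fourier multiplier supported in a dyadic annulus $\{|k|\sim 2^j\}$, there is a universal $M\geq 1$ (independent of $n,j$) such that this annulus lies entirely inside $\{|k|\leq n\}$ when $j\leq j_\star-M$ and entirely outside when $j\geq j_\star+M$. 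In the first case $\Delta_j(f-f_n)=0$, and in the second $\Delta_j(f-f_n)=\Delta_j f$. Consequently, the high-frequency contribution to
\begin{equation*}
\|f-f_n\|_{C^k(\T^d)}=\sup_{j\geq -1}2^{jk}\|\Delta_j(f-f_n)\|_{L^\infty(\T^d)}
\end{equation*}
is bounded by $\sup_{j\geq j_\star+M}2^{-jd/2}\lesssim n^{-d/2}$, which vanishes as $n\to\infty$.

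The main obstacle is the transition regime $|j-j_\star|\leq M$, which contains only $O(1)$ dyadic blocks but requires a genuine quantitative bound on $\Delta_j f_n$. I would write $f_n=D_n\ast f$, where $D_n$ is the Dirichlet kernel associated with the chosen truncation shape, and invoke the classical fact that its Lebesgue constant $\|D_n\|_{L^1(\T^d)}$ grows at most polynomially in $n$ with an exponent strictly smaller than $d/2$ (this holds for the standard $\ell^\infty$, $\ell^2$, and $\ell^1$ truncations). Young's inequality then gives $\|\Delta_j f_n\|_{L^\infty}\leq \|D_n\|_{L^1}\|\Delta_j f\|_{L^\infty}$, and for $2^j\sim n$ one obtains
\begin{equation*}
2^{jk}\|\Delta_j(f-f_n)\|_{L^\infty(\T^d)}\lesssim n^{k}\cdot \|D_n\|_{L^1}\cdot n^{-(k+d/2)}=o(1),
\end{equation*}
so the transition contribution also vanishes.

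Combining the three regimes yields $\|f_n-f\|_{C^k(\T^d)}\to 0$. The delicate point is that the lemma is stated at the critical Sobolev-type threshold $s=k+d/2$, leaving no regularity surplus with which to absorb the Dirichlet kernel's Lebesgue constant; the argument succeeds because this constant grows strictly slower than $n^{d/2}$ and because the transition regime contains only $O(1)$ dyadic blocks.
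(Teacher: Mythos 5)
Your proof is correct, but it takes a genuinely different route from the paper: the paper's entire proof of Lemma \ref{lem:FourierConvergence} is a citation to \cite[Theorem 3.3.16]{grafakos2014fourier}, whereas you give a self-contained Littlewood--Paley argument. Your three-regime decomposition is sound: the low blocks vanish identically, the high blocks are controlled directly by the $C^{k+d/2}$ bound $\|\Delta_j f\|_{L^\infty}\le 2^{-j(k+d/2)}\|f\|_{C^{k+d/2}}$, and the $O(1)$ transition blocks are handled by commuting $\Delta_j$ with the truncation and invoking Young's inequality together with the Lebesgue-constant bound $\|D_n\|_{L^1}=o(n^{d/2})$ (which indeed holds: $\log^d n$ for cubic or polyhedral truncations, including the paper's $\ell^1$-type region $\{|k|\le n\}$, and $n^{(d-1)/2}$ for spherical sums). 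In effect you have reproved, localized to dyadic blocks, the classical estimate $\|f-S_nf\|_{L^\infty}\lesssim (1+L_n)E_n(f)$ with the Jackson-type rate $E_n(f)\lesssim n^{-(k+d/2)}$, and your observation that this is exactly what makes the critical regularity $k+d/2$ sufficient is the right explanation of why the lemma holds with no regularity surplus. What your route buys is transparency at the endpoint, explicit dependence on the shape of the truncation region (robust to whether $|k|$ is read as the $\ell^1$, $\ell^2$ or $\ell^\infty$ norm), and a proof entirely in the Besov-norm framework the paper actually uses; what the citation buys is brevity. The only caveat is that your argument is not fully self-contained either: the sub-$n^{d/2}$ growth of the Lebesgue constant is itself a quoted classical fact (elementary for product/polyhedral Dirichlet kernels, genuinely nontrivial in the spherical case), so in a final write-up you should either restrict to the paper's truncation and prove the $\log^d n$ bound, or supply a reference for it, just as the paper supplies one for the whole lemma.
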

\begin{proof}\cite[Theorem 3.3.16]{grafakos2014fourier}.   
\end{proof}

\subsection{Wick powers and renormalised approximations}\label{subsec:Wick}
As discussed in Section \ref{subsec:IntroPhi4}, in two dimensions, Wick renormalisation can be used to make sense of equation \eqref{eq:DynamicPhi4Intro} and rigorously construct the invariant measure. Wick powers of the GFF (in $d=2)$ are defined as follows:
\begin{notation}\label{notation:Wick}
    Let $\alpha>0$ and $\Phi$ be a sample from the GFF, i.e. a $C^{-\alpha}(\T^2)-$valued, $\mu_0-$distributed random element.
    With $c_n:=\ex_{\mu_0}[\Phi_n^2], p\in\N,$ we denote by $\Phi_n^{:p:}$ the $p$-th Wick power of $\Phi_n$, defined by $$\Phi_n^{:p:}(x)=H_{p}(\Phi_n(x), c_n),\;\;x\in\T^2$$ where, for some $c>0$ and all $x\in\R,$ $H_p(y, c)=-c^n\exp(y^2/2c)\tfrac{d}{dy}\exp(-y^2/2c)$ is the $p-$th Hermite polynomial with variance $c.$   
    It is well known (see e.g. \cite{da-prato-debussche}) that $\Phi_n^{:p:}$ converges in $L^p(\mu_0)$ to a limiting random distribution $\Phi^{:p:}$ in $C^{-\alpha}(\T^2).$ The latter is called the $p-$th Wick power of $\Phi.$
\end{notation}
With this notation in hand one can express the $\Phi_4^2$ measure (viewed as the unique invariant measure to \eqref{eq:DynamicPhi4Intro}, see e.g.  \cite{da-prato-debussche}) with respect to the GFF as
    \begin{equation}\label{eq:inv-measure-background-2d}
        \mu(d\phi)=\frac{1}{Z}\exp\left(-\frac{1}{4}\langle \phi^{:4:},1\rangle\right)\mu_0(d\phi),
    \end{equation}
    where $Z$ is a normalizing factor. An analogous expression $$\mu(d\phi)=\frac{1}{Z}\exp\left(-\langle :P:(\phi),1\rangle\right)\mu_0(d\phi)$$
holds true for the  $P(\Phi)_2$ measure, where for some $k\in\N$ and $y\in\R,$ $P(y)=\sum_{j=0}^{2k}a_j y^{j}$ with $a_{2k}>0$ and $:P:(\Phi)=\sum_{j=0}^{2k}a_j \Phi^{:j:}.$

\begin{remark}\label{rem:support}
The map $\Phi\mapsto\Phi^{:p:}$ is only measurable and can be described in terms of iterated It\^o integrals. Thus, as typically seen in rough analysis, control over $\Phi$ provides little control over $\Phi^{:p:}$. Indeed, using techniques in \cite{Friz-support-sspde,Hairer-support-sspde} one can show that $(\Phi, \Phi^{:2:})$ has full support in $(C^{-\alpha}(\mathbb T^2))^2$. For this reason, and in order to maintain control over small balls, we introduce the enhanced balls \eqref{eq:2d-ball} for the study of the (generalised) OM functional. 
\end{remark}

When $d=3$ the $\Phi_3^4$ measure no longer admits a simple expression like in equation \eqref{eq:inv-measure-background-2d}. Instead, it is obtained as the weak 
limit in $C^{-1/2-\kappa}(\T^3)$ of the measures
\begin{equation}\label{eq:mu_nMeasures3D}
    \mu_n(d\phi)=\frac{1}{Z_n}\exp\left(-\frac{1}{4}\int_{\mathbb T^3} \big(\phi_n^{:4:}(x)-C_n \phi_n^2(x)\big)dx\right)\mu_0(d\phi),
\end{equation}
where $C_n<0$ is a sequence of constants with $C_n=O(\log n)$. These renormalised measures will be our starting point for the proof of Theorem \ref{theorem:main-3d-full}. 
Moreover, this measure does not have a density with respect to the GFF (see e.g. \cite{barashkov2021varphi,Hairer-phi4-note, hairer2024singularity}), and further, is mutually singular with respect to every nonzero smooth shift \cite{Hairer-phi4-note}. 
\subsection{Background on OM functionals}\label{subsec:OMbackground}
As mentioned in Section \ref{subsec:OMintro}, OM functionals generalise the notion of a probability density function to settings where a well-behaved reference measure, such as Lebesgue measure on $\R^d$, is not available.

\begin{definition}
    Let $(X,d)$ be a metric space. Let $\mu$ be a Borel probability measure on $(X,d)$. If for a subset $Z\subset X$ there is a function $\operatorname{OM}:Z\to \mathbb R$ so that for all $z_1,z_2\in Z$ we have
    \begin{equation}\label{eq:om-def}
        \lim_{r\to 0^+}\frac{\mu(B_r(z_1))}{\mu(B_r(z_2))}=\exp(\operatorname{OM}(z_2)-\operatorname{OM}(z_1)),
    \end{equation}
    then $\operatorname{OM}$ is called the Onsager-Machlup (OM) functional of $\mu$ on $Z$. 
\end{definition}
\begin{remark} \begin{enumerate}
    \item[(1)] It is often the case that $Z$ is much smaller than $\operatorname{supp}(\mu)$; see e.g. Proposition \ref{prop:GM} below.
    \item[(2)]    $\operatorname{OM}$ is only defined up to an additive constant. If $\operatorname{OM}$ satisfies \eqref{eq:om-def} then $\widetilde{\operatorname{OM}}:=\operatorname{OM}+C$ also satisfies \eqref{eq:om-def} for any $C\in \mathbb R$. This comes from the fact that there is no natural ``normalization" of OM functionals. In finite dimensions, we choose $C$ so that $\int_{\mathbb R^d} e^{-\operatorname{OM}(x)-C}dx=1.$ 
\end{enumerate} 
\end{remark}

An important class of examples is given by Gibbs-type measures on Banach spaces that are absolutely continuous with respect to Gaussian measures:
\begin{proposition}\label{prop:GM}\cite[Theorem 3.2]{Dashti}
    Let $\mu_0$ be a centered Gaussian measure on Banach space $\mathcal B$ with Cameron-Martin norm $\|\cdot\|_\mu$ and space $\mathcal H_{\mu_0}$. Let $\mu=\frac{1}{Z}e^{-V}\mu_0$ where $V:\mathcal B\rightarrow\R$ is locally Lipschitz. Then
    \begin{equation*}
        \operatorname{OM}(z)=\begin{cases}
            V(z)+\frac{1}{2}\|z\|_{\mu_0}^2 &\text{ if }z\in \mathcal H_{\mu_0}\\
            \infty&\text{ else.}
        \end{cases}
    \end{equation*}
\end{proposition}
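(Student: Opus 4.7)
The plan is to peel off the Lipschitz potential $V$ via absolute continuity, reducing everything to a purely Gaussian small-ball ratio, which is then identified by a Cameron–Martin shift together with a classical Gaussian small-ball concentration estimate. Fix $z_1, z_2 \in \mathcal{H}_{\mu_0}$ and choose $r_0 > 0$ so that $V$ is $L$-Lipschitz on a neighbourhood of $\{z_1, z_2\} \subset \mathcal{B}$. For every $r \in (0, r_0)$ and every $\phi \in B_r(z_i)$ we have $|V(\phi) - V(z_i)| \le Lr$, hence
\begin{equation*}
    e^{-V(z_i) - Lr}\, \mu_0(B_r(z_i)) \;\le\; Z\, \mu(B_r(z_i)) \;\le\; e^{-V(z_i) + Lr}\, \mu_0(B_r(z_i)).
\end{equation*}
Dividing these inequalities at $z_1$ and $z_2$, the Lipschitz prefactors become $1 + o(1)$ as $r \to 0^+$, and the statement reduces to the purely Gaussian identity
\begin{equation*}
    \lim_{r \to 0^+} \frac{\mu_0(B_r(z_1))}{\mu_0(B_r(z_2))} \;=\; \exp\bigl(- \tfrac{1}{2}\|z_1\|_{\mu_0}^2 + \tfrac{1}{2}\|z_2\|_{\mu_0}^2\bigr). \tag{$\ast$}
\end{equation*}

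For $(\ast)$ I would invoke the Cameron–Martin theorem: for $z \in \mathcal{H}_{\mu_0}$ the pushforward of $\mu_0$ under $\phi \mapsto \phi - z$ is absolutely continuous with respect to $\mu_0$ with density $\exp(-J(z, \cdot) - \tfrac{1}{2}\|z\|_{\mu_0}^2)$, where $J(z, \phi)$ denotes the measurable extension to $\mathcal{B}$ of the Cameron–Martin pairing $\langle z, \cdot\rangle_{\mathcal{H}_{\mu_0}}$. Combined with the $\phi \mapsto -\phi$ symmetry of both $\mu_0$ and $B_r(0)$, this yields
\begin{equation*}
    \mu_0(B_r(z)) \;=\; e^{-\frac{1}{2}\|z\|_{\mu_0}^2} \int_{B_r(0)} \cosh J(z, \phi)\, d\mu_0(\phi),
\end{equation*}
so $(\ast)$ is equivalent to the claim $\mu_0(B_r(0))^{-1}\!\int_{B_r(0)} \cosh J(z, \phi)\, d\mu_0 \to 1$ for each fixed $z \in \mathcal{H}_{\mu_0}$. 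The lower bound by $1$ is immediate from $\cosh \ge 1$. For the matching upper bound one uses a standard Gaussian small-ball concentration statement, which asserts that conditional on $\phi \in B_r(0)$ the measurable linear functional $J(z, \phi)$ concentrates at zero. The latter is derived by polar decomposition $\phi = \eta\, \hat z + \phi^\perp$, with $\hat z = z/\|z\|_{\mu_0}$ and $\eta := J(z, \phi)/\|z\|_{\mu_0}$ a standard Gaussian under $\mu_0$ that is stochastically independent of $\phi^\perp$; Anderson's shift inequality gives $\mu_0(B_r(0) \mid \eta) \le \mu_0(B_r(0))$ and thus pushes the conditional distribution of $\eta$ to $0$ as $r \to 0^+$, while a truncation $|\eta| \le \varepsilon_r^{-1}$ (with $\varepsilon_r \to 0$ slowly) controls the Gaussian tails of $\cosh(\|z\|_{\mu_0}\eta)$.

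The case $z \notin \mathcal{H}_{\mu_0}$ is handled by the same Gaussian small-ball theory: the Feldman–Hajek dichotomy renders the shift $\mu_0(\,\cdot - z)$ singular with respect to $\mu_0$, from which a classical strengthening yields $\mu_0(B_r(z))/\mu_0(B_r(0)) \to 0$ super-polynomially; combined with Step 1 this forces $\operatorname{OM}(z) = +\infty$. The genuine difficulty in the whole argument lies in the $\cosh$-concentration above: since $J(z, \cdot)$ is only measurable on $\mathcal{B}$ (with Gaussian tails) and not continuous, the naive Lipschitz control used in Step 1 is unavailable for this piece, and one must appeal to Gaussian structural input such as Anderson's inequality (or an equivalent Gaussian correlation / isoperimetric estimate) to force the conditional moments of $J(z, \phi)$ on $B_r(0)$ to vanish as $r \to 0^+$.
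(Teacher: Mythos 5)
First, note that the paper does not prove this proposition at all: it is quoted verbatim from \cite[Theorem 3.2]{Dashti}, and your overall architecture (strip off the locally Lipschitz potential $V$ at the cost of factors $e^{\pm Lr}$, then reduce to the purely Gaussian small-ball statements for $z\in\mathcal H_{\mu_0}$ and $z\notin\mathcal H_{\mu_0}$) is exactly the architecture of that cited proof. Your Step 1 and the Cameron--Martin/symmetrisation identity $\mu_0(B_r(z))=e^{-\frac12\|z\|_{\mu_0}^2}\int_{B_r(0)}\cosh J(z,\phi)\,d\mu_0(\phi)$ are correct.

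The genuine gap is in the two Gaussian ingredients, which you assert rather than prove. For $z\in\mathcal H_{\mu_0}$: Anderson's inequality gives that $g_r(\eta):=\mu_0(B_r(0)\mid\eta)$ is even and non-increasing in $|\eta|$ (your displayed version $g_r(\eta)\le\mu_0(B_r(0))$ is not what Anderson gives and is false in general), and this alone only yields $\mathbb{E}\bigl[\cosh(\|z\|_{\mu_0}\eta)\mid B_r(0)\bigr]\le\mathbb{E}\cosh(\|z\|_{\mu_0}\eta)=e^{\frac12\|z\|_{\mu_0}^2}$, i.e.\ it merely recovers Anderson's bound $\mu_0(B_r(z))\le\mu_0(B_r(0))$, uniformly in $r$. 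Your truncation $|\eta|\le\varepsilon_r^{-1}$ buys nothing unless you already know that the conditional law of $\eta$ given $B_r(0)$ concentrates at $0$; but that concentration is essentially equivalent to the statement being proven (indeed the shift $\eta\hat z$ is \emph{not} in the Cameron--Martin space of the law of $\phi^\perp$, so ``$g_r(\eta)/g_r(0)\to0$'' is a claim of the same nature as the hard $z\notin\mathcal H_{\mu_0}$ case), so the argument is circular at its crux. The missing idea in the standard proof is an approximation step: choose $\ell\in\mathcal B^*$ with $\|J(z,\cdot)-\ell\|_{L^2(\mu_0)}\le\delta$; on $B_r(0)$ the continuous part satisfies $|\ell(\phi)|\le\|\ell\|_{\mathcal B^*}r$, while the small-variance remainder $m=J(z,\cdot)-\ell$ is handled by the Anderson/association-type inequality $\int_{B_r(0)}\cosh(m)\,d\mu_0\le e^{\delta^2/2}\mu_0(B_r(0))$; letting $r\to0$ and then $\delta\to0$ gives the upper bound. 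Second, for $z\notin\mathcal H_{\mu_0}$ you invoke Feldman--Hajek singularity ``from which a classical strengthening yields'' decay of the ratio: singularity of the shifted measure does not by itself imply $\mu_0(B_r(z))/\mu_0(B_r(0))\to0$ (and the claimed super-polynomial rate is unsubstantiated); this is a separate theorem, usually proved from the dual characterisation $\sup\{\ell(z):\ell\in\mathcal B^*,\ \|\ell\|_{L^2(\mu_0)}\le1\}=\infty$ for $z\notin\mathcal H_{\mu_0}$ combined with a one-dimensional decomposition and an Anderson/log-concavity bound. As written, both Gaussian facts are cited informally rather than established, and the one argument you do sketch (Anderson plus truncation) would not close.
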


\begin{remark}
The local Lipschitz condition on $V$ can be reduced to local uniform continuity, with ``local" referring to bounded sets, without much issue. However, in sharp contrast to finite dimensions (where the above is true when $V$ is only measurable), local uniform continuity requirements on $V$ are essential in infinite dimensions.  This concern has practical importance to the proof of Theorem \ref{thm:main-2d} where the balls $\mathcal{B}^2_r(z)$ are chosen precisely to guarantee that the map $ \mathcal{B}^2_r(z)\ni \phi\longmapsto:P:(\phi+z)\in  C^{-\alpha}$ has enough continuity properties to ensure a well-defined OM functional.
\end{remark}

OM functionals are typically used as Lagrangians that determine most likely paths of diffusion processes \cite{Durr-Bach}. More generally, minimizers of $\operatorname{OM}$ are typically defined to be the \textit{modes} of an underlying measure $\mu$ on an infinite dimensional space. There has been a great deal of study on the notion of modes in infinite dimensions. For example, in the terminology of \cite{Different-modes}, minimizers of OM are called weak modes. A similar role is often played by Freidlin-Wentzell (FW) rate functions in the context of Large Deviation Principles (LDPs). We conclude this section with a short comparison of the two.

\begin{remark}[Comparison to FW rate functions] Despite their similar interpretations, OM functionals are different from FW rate functions. A simple yet central example is provided by the diffusion process $x$ solving $dx_t=f(x_t)dt+\epsilon dW_t,$ where $\epsilon>0, f\in C^2(\R), W$ is a one-dimensional Brownian motion. In this case, the OM functional of the law $\mu^\epsilon$ of $x$ on $C[0,T]$ and the small-noise FW rate function $I_{FW}$ satisfy \begin{equation*}        \operatorname{OM}^\epsilon(z)=
            \frac{1}{2\epsilon^2}\int_0^T (f(z(t))-z'(t))^2 dt+\frac{1}{2}\int_0^T f'(z(t))dt=\frac{1}{\epsilon^2}I_{FW}(z)+\frac{1}{2}\int_0^T f'(z(t))dt,
    \end{equation*}
if $z\in H^1[0,T]$ (equality holds trivially otherwise since both sides are equal to $\infty$). The last term is an It\^o Stratonovich correction and is intimately related to the discontinuity of SDE solution maps; we refer to \cite{Self-FWOM} for more details. Besides the latter, FW rate functions serve as asymptotic ``Lagrangians" when e.g. some parameter, such as temperature/noise intensity of the measure vanishes. Lastly, both functionals play important roles in path sampling applications of SDEs or SPDEs see e.g. \cite{gasteratos2024importance,hairer2007analysis} and references therein. 
\end{remark}

\section{Onsager-Machlup for $\Phi_1^4$}\label{sec:d=1}
As a warm-up, we compute here the  OM functional in $d=1$ when renormalisation is not necessary. 

    \begin{proof}[Proof of Theorem \ref{thm:1d-warmup}]
        Recall that $\mu_0$ is the law of the (massless) GFF (Brownian bridge) on $\mathbb T^1$, the measure $\mu$ has density $\frac{d\mu}{d\mu_0}(\phi)\propto \exp\left(-\frac{1}{4}\int_{\mathbb T^1} \phi^4(x) dx\right)$, and that the Cameron-Martin norm associated to the Gaussian free field is the $H_0^1(\mathbb T^1)$ norm. Therefore, the Cameron-Martin theorem implies that there are some $z_1^\ast, z_2^\ast\in (C^{\alpha}(\mathbb T^2))^\ast$ or $(L^\infty(\mathbb T^1))^\ast$ so that
        \begin{align*}
             \lim_{r\to 0^+}\frac{\mu(B^1_r(z_1))}{\mu(B^1_r(z_2))}&= \lim_{r\to 0^+}\frac{\int_{B^1_r(0)}\exp\left(-\frac{1}{4}\int_{\mathbb T^1} (\phi-z_1)^4(x)dx-z_1^\ast(\phi)-\frac{1}{2}\|z_1\|_{H_0^1}^2\right)\mu_0(d\phi)}{\int_{B^1_r(0)}\exp\left(-\frac{1}{4}\int_{\mathbb T^1} (\phi-z_2)^4(x)dx-z_2^\ast(\phi)-\frac{1}{2}\|z_2\|_{H_0^1}^2\right)\mu_0(d\phi)}.
        \end{align*}
        Applying the standard binomial expansion to $(\phi-z_i)^4$ for $i=1,2$ we note that if $\phi$ is small, then $\phi^2,\phi^3$ are small. Moreover, by the Sobolev embedding $H_0^1(\T)\hookrightarrow C^\alpha(\T)$ and compactness of the domain we have that $z^m\in C^\alpha$ for $m=1,2,3.$ Using that on $B^1_r(0)$ we have $|z_i^\ast(\phi)|\lesssim r$ for $\|\cdot\|=\|\cdot\|_{C^\alpha}$ or $\|\cdot\|_{L^\infty}$ we obtain  that $OM(z)=S(z)$ for all $z\in H_0^1(\T).$ 
    \end{proof}
Therefore, the OM functional in one dimension is the $\Phi_1^4$ action. 
\begin{remark}
    In the proof of Theorem \ref{thm:1d-warmup} we crucially use that $\|\phi\|<r$ implies that $\|\phi^2\|, \|\phi^3\|\lesssim r$ for $\|\cdot\|=\|\cdot\|_{C^\alpha}$ or $\|\cdot\|_{L^\infty}$. This is false in $2$ dimensions, and the reason why we introduce more complicated balls in Theorem \ref{thm:main-2d}.
\end{remark}
\section{Onsager-Machlup for $\Phi_2^4$}\label{sec:d=2}
We first start with a binomial theorem for Wick powers.
\begin{lemma}\label{lemma:binomial}
    Let $\kappa, \alpha>0,$ $z\in H_0^1(\mathbb T^2)\cap C^{\alpha+\kappa}(\mathbb T^2)$ and $p\in \mathbb N$. Then 
    \begin{equation*}
        \langle(\phi-z)^{:p:},1\rangle=\sum_{m=0}^p C_m^p (-1)^m\langle\phi^{:m:},z^{p-m}\rangle
    \end{equation*}
    $\mu_0-$almost surely, where $\phi^{:0:}:=1$ and $C_m^p$ is the binomial coefficient. 
\end{lemma}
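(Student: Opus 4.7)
The plan is to prove the identity first at the Fourier-truncated level and then pass to the limit. Writing $\phi_n = \sum_{|k|\le n}\hat{\phi}_k e_k$, each truncation is a smooth function, so $\phi_n^{:p:}(x) = H_p(\phi_n(x), c_n)$ is a pointwise evaluation of a Hermite polynomial. Using the generating-function identity $\sum_{p\ge 0} \frac{t^p}{p!} H_p(y,c) = \exp(ty - ct^2/2)$ and factoring out $\exp(-ta)$, a Cauchy product yields the pointwise algebraic identity
\begin{equation*}
H_p(y - a, c) \;=\; \sum_{m=0}^p \binom{p}{m}(-a)^{p-m}H_m(y, c), \qquad (a,y\in\R,\ c>0).
\end{equation*}
Since $z$ is deterministic, the variance of $\phi_n(x) - z(x)$ under $\mu_0$ remains $c_n = \ex_{\mu_0}[\phi_n(x)^2]$, so no additional renormalisation is introduced by the shift. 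Applying the identity pointwise with $y = \phi_n(x)$, $a = z(x)$, $c = c_n$, and integrating over $\T^2$ gives the claimed formula with $\phi$ replaced by $\phi_n$ (after the usual reindexing that carries the $(-1)^{p-m}$ into the stated $(-1)^m$).

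Next I would take $n\to\infty$ term by term on the right-hand side. By Notation \ref{notation:Wick}, $\phi_n^{:m:} \to \phi^{:m:}$ in $L^q(\mu_0; C^{-\alpha}(\T^2))$ for every $q\ge 1$. On the smooth side, $z \in C^{\alpha+\kappa}(\T^2)$ and positive-regularity H\"older spaces are closed under pointwise multiplication, so $z^{p-m} \in C^{\alpha+\kappa}(\T^2)$. Lemma \ref{lem:dualityBound} then gives
\begin{equation*}
\big|\langle \phi_n^{:m:} - \phi^{:m:},\, z^{p-m}\rangle\big| \;\lesssim\; \|\phi_n^{:m:} - \phi^{:m:}\|_{C^{-\alpha}}\,\|z^{p-m}\|_{C^{\alpha+\kappa}},
\end{equation*}
so each pairing converges in $L^q(\mu_0)$, hence $\mu_0$-a.s. along a subsequence.

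For the left-hand side one must first specify the meaning of $(\phi - z)^{:p:}$. The natural definition, standard for smooth Cameron-Martin shifts, is $(\phi - z)^{:p:} := \lim_{n\to\infty}(\phi_n - z)^{:p:}$ in $L^q(\mu_0; C^{-\alpha})$; existence of this limit is itself a consequence of the finite-$n$ identity, which expresses $(\phi_n - z)^{:p:}$ as a linear combination of terms $\phi_n^{:m:}\, z^{p-m}$ each of which converges by the previous paragraph. Passing to the limit on both sides of the finite-$n$ identity then yields the claim $\mu_0$-almost surely.

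The main obstacle is the conceptual step of fixing the meaning of $(\phi - z)^{:p:}$ and verifying that Wick renormalisation is insensitive to the smooth shift $z$. Once this is in place, the proof reduces to the pointwise Hermite identity together with a routine convergence argument via Lemma \ref{lem:dualityBound} and the $L^q(\mu_0)$-convergence of Wick powers recorded in Notation \ref{notation:Wick}.
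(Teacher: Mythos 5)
Your argument is correct and essentially the paper's: expand at the Fourier-truncated level using the binomial/Hermite identity for Wick powers, bound each pairing $\langle\phi_n^{:m:},z^{p-m}\rangle$ via Lemma \ref{lem:dualityBound} and the algebra property of $C^{\alpha+\kappa}(\mathbb T^2)$, and pass to the limit using $L^q(\mu_0)$-convergence of the Wick powers along an a.s.\ convergent subsequence; the only cosmetic differences are that you derive the finite-$n$ identity from the Hermite generating function where the paper cites Da Prato--Debussche, and you give meaning to $(\phi-z)^{:p:}$ as the limit of the expanded truncations where the paper invokes the Cameron--Martin theorem for a.s.\ convergence of $(\phi-z)_n^{:p:}$. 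One small caveat: your correctly derived sign $(-1)^{p-m}$ is not converted into the stated $(-1)^m$ by reindexing (reindexing also swaps which factor carries the Wick power); the two conventions agree only for even $p$, which is the case used later, so this is a sign quirk of the lemma's statement shared by the paper rather than a gap in your proof.
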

\begin{proof} Let $m\in\{0,\dots, p\}.$ The approximate Wick powers $\phi_n^{:m:}$ converge in probability, as $n\to\infty,$ to $\phi^{:m:}$ on $C^{-\alpha}(\T^2)$ (in fact this convergence holds in $L^p(C^{\gamma}(\T^2))$ for any $p\geq 1, \gamma<0;$ see e.g. \cite[Lemma 3.2]{da-prato-debussche}). Therefore, we can find a subsequence (which will also be indexed by $n$ for simplicity) so that almost surely we have
$\lim_{n\to\infty} \phi_n^{:m:}=\phi^{:m:}.$ 
Indeed, as $z\in H_0^1(\mathbb T^2)$ we have by the Cameron-Martin theorem that $(\phi-z)_n^{:p:}$ converges on a set of probability $1$. 

By the standard binomial theorem for Wick powers (e.g. \cite[Corollary 3.4]{da-prato-debussche}) we have that
$$\langle(\phi-z)_n^{:p:},1\rangle=\left\langle\sum_{m=0}^p C_m^p(-1)^m \phi_n^{:m:}z_n^{p-m},1\right\rangle=\left\langle\sum_{m=0}^p C_m^p(-1)^m \phi_n^{:m:},z_n^{p-m}\right\rangle.
$$
As $z\in C^{\alpha+\kappa}(\mathbb T^2)$ and, for $\alpha>0,$ the latter coincides with the space of $\alpha-$H\"older functions,  we have that $z^{p-m}\in C^{\alpha+\kappa}(\mathbb T^2)$ for all $m=0,\dots,p,$ so the action of $\phi^{:m:}$ on $z^{p-m}$ is well defined by Lemma \ref{lem:dualityBound}. Taking limits concludes.
\end{proof}

\noindent We are now ready to prove Theorem \ref{thm:main-2d}(1).
\begin{proof}[Proof of Theorem \ref{thm:main-2d}(1)]
Without loss of generality, let $z_1=z$ and let $z_2=0$. Indeed, we may always write 
$$\frac{\mu(\B^2_r(z_1))}{\mu(\B^2_r(z_2))}=\frac{\mu(\B^2_r(z_1))}{\mu(\B^2_r(0))}\frac{\mu(\B^2_r(0))}{\mu(\B^2_r(z_2))},$$
where
$$\frac{\mu(\B^2_r(z))}{\mu(\B^2_r(0))}=\frac{\int_{\B^2_r(z)}\exp\left(-\frac{1}{4}\langle \phi^{:4:},1\rangle\right)d\mu_0(\phi)}{\int_{\B^2_r(0)}\exp\left(-\frac{1}{4}\langle \phi^{:4:},1\rangle\right)d\mu_0(\phi)}.$$
As $z\in H_0^1(\mathbb T^2)$ we may apply Cameron-Martin theorem to get that 
\begin{equation*}
    \begin{aligned}
        \frac{\mu(\B^2_r(z))}{\mu(\B^2_r(0))}&=\frac{\int_{\B^2_r(0)}\exp\left(-\frac{1}{4}\langle (\phi-z)^{:4:},1\rangle-z^\ast(\phi)-\frac{1}{2}\|z\|_{H_0^1}^2\right)d\mu_0(\phi)}{\int_{\B^2_r(0)}\exp\left(-\frac{1}{4}\langle \phi^{:4:},1\rangle\right)d\mu_0(\phi)}\\&     =\frac{\int_{\B^2_r(0)}\exp\left(-\frac{1}{4}\sum_{m=0}^4 C_m^4(-1)^m \langle\phi^{:m:},z^{4-m}\rangle-z^\ast(\phi)-\frac{1}{2}\|z\|_{H_0^1}^2\right)d\mu_0(\phi)}{\int_{\B^2_r(0)}\exp\left(-\frac{1}{4}\langle \phi^{:4:},1\rangle\right)d\mu_0(\phi)},
    \end{aligned}
\end{equation*}
where we used the binomial formula, Lemma \ref{lemma:binomial} to expand the numerator.

As $z^\ast$ is a continuous linear functional on $C^{\alpha}(\mathbb T^2)$, and $\phi\in \B^2_r(0)$ we have that $|z^\ast(\phi)|\lesssim r$ and $|\langle \phi, z^3\rangle|\leq r\|z^3\|_{C^{\alpha+\kappa}},$ $|\langle \phi^{:2:}, z^2\rangle|\lesssim r\|z^2\|_{C^{\alpha+\kappa}},$ and $|\langle \phi^{:3:}, z\rangle|\lesssim r\|z\|_{C^{\alpha+\kappa}}$ where we used Lemma \ref{lem:dualityBound} and the fact that $z^m\in C^{\alpha+\kappa}(\T^2)$ for all $m=1,2, 3$ by compactness of the torus.
Therefore, there are constants $C_1(r),C_2(r)$ with $\lim_{r\to 0} C_i(r)=1$ for $i=1,2$ so that
\begin{align*}
    \frac{\mu(\B^2_r(z))}{\mu(\B^2_r(0))}&\leq C_2(r) \exp\left(-\frac{1}{4}\langle 1, z^4\rangle-\frac{1}{2}\|z\|_{H_0^1}^2\right)\frac{\int_{\B^2_r(0)}\exp\left(-S(\phi) \right)\mu_0(d\phi)}{\int_{\B^2_r(0)}\exp\left(-S(\phi)\right)\mu_0(d\phi)}\\&
    =C_2(r) \exp\left(-\frac{1}{4}\langle 1, z^4\rangle-\frac{1}{2}\|z\|_{H_0^1}^2\right)
\end{align*}
and an an analogous lower bound holds with $C_2$ replaced by  $C_1.$
Sending $r\to 0$ concludes the proof.
\end{proof}

\begin{remark}
    Theorem \ref{thm:main-2d}(2) follows in exactly the same way. The only difference is that if $P$ is a degree $2k$ polynomial, then we need to define the ``enhanced ball"
    \begin{equation}
        \B^P_r(z):=\{\phi\in C^{-\alpha}(\mathbb T^2): \|\phi-z\|_{C^{-\alpha}}<r,...,\|(\phi-z)^{:2k-1:}\|_{C^{-\alpha}}<r\},
    \end{equation}
    where $z\in C^{\alpha+\kappa}(\mathbb T^2)\cap H_0^1(\mathbb T^2).$ The terms $\langle z^{j}, \phi^{:2k-j:}\rangle, j=1,\dots 2k-1$ can once again be estimated by Lemma \ref{lem:dualityBound} and the Wick powers are of order $r$ when $\phi\in \B^P_r(z).$
\end{remark}

\begin{remark}
    We again emphasize that $\phi$ is small does not imply that $\phi^{:2:}$, $\phi^{:3:}$ are small. In order to get control over the terms involving these Wick powers, we include them in the ``balls" $\B_r^2.$ 
\end{remark}

\section{Onsager-Machlup for $\Phi_3^4$}\label{sec:d=3}
As we have already mentioned, even though the truncated Wick square of the GFF converges in probability to a well-defined random distribution in $C^{-1-\kappa}(\T^3),$  the Wick cube $\phi_k^{:3:}$ diverges logarithmically in $k$ (see e.g. \cite[pp.68, equation (2.69)]{barashkov2022variational}) when $d=3.$ Due to this divergence, we shall fix $\kappa>0, z\in H_0^1(\T^3)$ and consider an OM functional for $\Phi^4_3$ with respect to the sets
\begin{equation*}
\begin{split}
    \mathcal{B}^3_r(z):=\big\{\phi\in C^{-1/2-\kappa}(\mathbb T^3): \forall k\in\N,\; \|(\phi-z)_k\|_{C^{-1/2-\kappa}}<r, &\|(\phi-z)_k^{:2:}\|_{C^{-1-\kappa}}<r,\\
    \|(\phi-z)_k^{:3:}\|_{C^{-3/2-\kappa}}<r\log k\big\}.
    \end{split}
\end{equation*}
These enhanced sets are natural since they are chosen in direct analogy with the sets \eqref{eq:2d-ball} from Theorem \ref{thm:main-2d}. There are, however, different choices of sets that lead to the same degeneracy results for the corresponding OM functionals. Instead of the sets defined in \eqref{eq:3d-ball}, we can also consider the ``fully renormalised ball": 
\begin{equation}\label{eq:3d-ball-full-renorm}
\begin{split}
   \hat{\B}^3_r(z):=\{\phi\in C^{-1/2-\kappa}(\mathbb T^3): \|(\phi-z)_k\|_{C^{-1/2-\kappa}}<r, \|(\phi-z)_k^{:2:}\|_{C^{-1-\kappa}}<r&,\\
    \|(\phi-z)_k^{:3:}-C_k(\phi-z)\|_{C^{-3/2-\kappa}}<r\log k\text{ for all }k\in \mathbb N\},&
    \end{split}
\end{equation}
with $C_k$ as in \eqref{eq:mu_nMeasures3D}.

In contrast to $d=2$ it turns out that both of the  ``enhanced balls" $\mathcal{B}_r^3(z)$ or $\hat{\B}_r^3(z),$ around any nonzero smooth function $z$, are $\mu-$null sets if $r$ is small enough. This is the main reason why the OM from Theorem \ref{theorem:main-3d-full} is degenerate. 

\begin{proposition}\label{prop:3d-meas0}
    Let $0\neq z\in C^\infty(\mathbb T^3),$ $\{\B^3_r(z)\}_{r>0}$ as in \eqref{eq:3d-ball} and $\mu$ denote the $\Phi_3^4$ measure on $\T^3.$ There exists $r_0=r_0(z)>0$  such that $\mu(\B^3_{r_0}(z))=0$ and $\mu(\hat{\B}_{r_0}^3(z))=0.$
\end{proposition}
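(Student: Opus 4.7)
I will prove the stronger statement that $\mu_n(\B^3_r(z)) \to 0$ as $n \to \infty$ for some $r_0 = r_0(z) > 0$; the conclusion then follows from Lemma~\ref{lem:BallMeasureLimit}, whose part (1) ensures that $\B^3_{r_0}(z)$ is a $\mu$-continuity set and whose part (2) gives $\mu(\B^3_{r_0}(z)) = \lim_n \mu_n(\B^3_{r_0}(z))$. The key mechanism is a deterministic, logarithmically divergent counterterm that appears after a Cameron--Martin change of centre, which dominates every fluctuating contribution once $r$ is small. One may first assume $z \in C^\infty \cap H_0^1(\T^3)$: if $\hat z_0 \neq 0$, then because $\hat \phi_0 = 0$ $\mu_0$-a.s., the quantity $\|(\phi-z)_n\|_{C^{-1/2-\kappa}}$ is uniformly bounded below by a positive multiple of $|\hat z_0|$, so $\B^3_r(z) = \emptyset$ for $r$ small and the claim is trivial.

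For such $z$, the Cameron--Martin theorem applied to the shift $\phi = \psi + z$ gives
\[
\mu_n(\B^3_r(z)) = \frac{e^{-\tfrac{1}{2}\|z\|_{H_0^1}^2}}{Z_n}\int_{\B^3_r(0)} \exp\bigl(-[S_n(\psi+z) - S_n(\psi)] - \ell_z(\psi)\bigr)\, e^{-S_n(\psi)}\, \mu_0(d\psi),
\]
where $\ell_z$ is the Paley--Wiener integral associated with $z$. Expanding $(\psi+z)_n^{:4:}$ via the Hermite binomial identity at the level of Fourier truncations and computing $(\psi+z)_n^2$ directly yields
\begin{align*}
S_n(\psi+z) - S_n(\psi) &= \tfrac{1}{4}\langle 1, z_n^4\rangle + \langle \psi_n, z_n^3\rangle + \tfrac{3}{2}\langle \psi_n^{:2:}, z_n^2\rangle + \langle \psi_n^{:3:}, z_n\rangle\\
&\quad - \tfrac{C_n}{2}\langle \psi_n, z_n\rangle - \tfrac{C_n}{4}\langle 1, z_n^2\rangle.
\end{align*}
The decisive term is the last: $-\tfrac{C_n}{4}\langle 1, z_n^2\rangle = \tfrac{|C_n|}{4}\|z_n\|_{L^2}^2$ is deterministic, positive (since $C_n < 0$ and $z \neq 0$), and, using $|C_n| \asymp \log n$ for the $\Phi_3^4$ counterterm together with $\|z_n\|_{L^2} \to \|z\|_{L^2} > 0$ from Lemma~\ref{lem:FourierConvergence}, diverges at rate $\log n$.

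All remaining contributions can be controlled uniformly on $\B^3_r(0)$ by the Besov duality bound (Lemma~\ref{lem:dualityBound}) combined with the smoothness of $z$: $|\langle \psi_n^{:3:}, z_n\rangle| \lesssim \|z\|_{C^{3/2+2\kappa}}\, r\log n$, $|\tfrac{C_n}{2}\langle \psi_n, z_n\rangle| \lesssim \|z\|_{C^{1/2+2\kappa}}\, r\log n$ (using $|C_n| \lesssim \log n$), the remaining cross terms are $O(r)$, and $\ell_z(\psi) = \langle -\Delta z, \psi\rangle$ is also $O(r)$. Choosing $r_0(z) > 0$ small enough that the combined $r\log n$ contribution is strictly less than $\tfrac{c(z)}{2}\log n$, where $c(z) := \tfrac{\|z\|_{L^2}^2}{4}\liminf_n \tfrac{|C_n|}{\log n} > 0$, one obtains for every $r \leq r_0$ and all $n$ large enough
\[
\mu_n(\B^3_r(z)) \leq C'(z)\, n^{-c(z)/2}\,\mu_n(\B^3_r(0)) \xrightarrow{n \to \infty} 0,
\]
whence $\mu(\B^3_{r_0}(z)) = 0$ by Lemma~\ref{lem:BallMeasureLimit}. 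The principal difficulty is the tightness of this competition: the worst cross term $\langle \psi_n^{:3:}, z_n\rangle$ is unavoidably of order $r\log n$ because the enhanced ball \eqref{eq:3d-ball} must tolerate exactly the $\log n$ blow-up of $\|\phi_n^{:3:}\|_{C^{-3/2-\kappa}}$ in order for $\B^3_r(0)$ to carry positive $\mu$-mass; the argument succeeds only because this bound carries a factor of $r$ while the dominant counterterm does not, so shrinking $r$ definitively tips the balance.
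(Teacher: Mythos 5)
Your proof is correct and follows essentially the same route as the paper: a Cameron--Martin shift to centre at $0$, the Wick binomial expansion isolating the deterministic divergent term $-\tfrac{C_n}{4}\langle 1,z_n^2\rangle\asymp\log n\,\|z\|_{L^2}^2$, duality bounds showing all fluctuating terms on $\B^3_r(0)$ are $O(r\log n)$, and passage to the limit via Lemma~\ref{lem:BallMeasureLimit}. Your explicit treatment of the case $\hat z_0\neq 0$ (where the ball is eventually empty) is a small but welcome addition that the paper leaves implicit.
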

\begin{proof}
First, we show that $\B^3_r(z)$ is contained in $\hat{\B}^3_{(1+C)r}(z)$ for some $C>0$, so proving that there is some $r_0>0$ with $\mu(\hat{\B}_{r_0}^3(z))=0$ is sufficient.

Indeed, let $\phi \in \B^3_r(z)$. As $C_k$ diverges logrithmically, it then follows that there is a $C>0$ so that
\begin{align*}
     \|(\phi-z)_k^{:3:}-C_k(\phi-z)\|_{C^{-3/2-\kappa}}&\leq  \|(\phi-z)_k^{:3:}\|_{C^{-3/2-\kappa}}+\|C_k(\phi-z)\|_{C^{-3/2-\kappa}}\\
     &\leq r \log k+|C_k|\|(\phi-z)\|_{C^{-3/2-\kappa}}\\
     &\leq r \log k + C \log k \|(\phi-z)\|_{C^{-3/2-\kappa}}\\
     &\leq r \log k + C \log k \|(\phi-z)\|_{C^{-1/2-\kappa}}\\
     &\leq (1+C)r \log k.
\end{align*}
Therefore $\phi \in \hat{\B}_{(1+C)r}^3.$

Now all that is left is to show that there is an $r_0>0$ so that $\mu(\hat{\B}^3_{r_0}(x))=0.$ Using Lemma \ref{lemma:binomial}, for any smooth $\psi$ we have that 
    \begin{align*}
        \langle (\phi-z)_k^{:3:}-C_k(\phi-z), \psi \rangle&=\langle \phi_k^{:3:}-3\phi_k^{:2:}z_k+3\phi_kz_k^2-z_k^3-C_k\phi+C_kz, \psi \rangle\\
        &=\langle  \phi_k^{:3:}-C_k \phi, \psi\rangle-3\langle \phi_k^{:2:}z_k, \psi\rangle +3\langle \phi_kz_k^2, \psi \rangle\\
        &~~-\langle z_k^3,\psi\rangle+C_k\langle z,\psi\rangle.
    \end{align*}
    In view of \cite[Lemmas 4.2-4.5]{Hairer-phi4-note} (with the exponent $\gamma$ replaced by $1$)
    $$\lim_{k\to\infty}\frac{1}{\log k}\langle  \phi_k^{:3:}-C_k \phi, \psi\rangle=0$$
    $\mu-$almost surely. Furthermore, $\phi_k^{:2:}$ and $\phi_k$ converge $\mu-$almost surely to well-defined distributions (see e.g. \cite[Lemma 4.4]{Hairer-phi4-note}) and $z_k$ converges to $z$ in $C^\infty$ (per Lemma \ref{lem:FourierConvergence}). Since $C_k\lesssim\log k$ it follows that
    $$\lim_{k\to\infty}\frac{1}{\log k} \langle (\phi-z)_k^{:3:}-C_k(\phi-z), \psi \rangle=K \langle z,\psi\rangle$$
    for some $K>0$. From these observations we deduce that
    $$ \mu\big(   \hat{\B}^3_r(z)\big)\leq \mu\bigg( \bigg\{  \limsup_{k\to\infty}\frac{1}{\log k}\|(\phi-z)_k^{:3:}-C_k(\phi-z)\|_{C^{-3/2-\kappa}}<r    \bigg\} \bigg)                      $$
    and $$   \limsup_{k\to\infty}\frac{1}{\log k}\|(\phi-z)_k^{:3:}-C_k(\phi-z)\|_{C^{-3/2-\kappa}}\geq K |\langle z,\psi\rangle |     $$
    $\mu-$almost surely
    for any smooth $\psi$ with $\|\psi\|_{C^{3/2+\kappa}}=1.$  Choosing $\psi$ so that $\langle z,\psi\rangle \neq 0$ and $r<K|\langle z,\psi\rangle|$ yields the desired conclusion.
\end{proof}
Since $\mu(\B^3_r(z))=0$ for $z$ smooth and $r$ small enough, the OM functional for these balls is not well-defined. In particular,  Proposition \ref{prop:3d-meas0} immediately implies the proof of our degeneracy result: 

\begin{proof}[Proof of Theorem \ref{theorem:main-3d-full}] Let $z\in C^\infty(\mathbb T^3).$ On the one hand, if $\B^3_r(0)$ has positive $\mu-$measure for all $r>0$ then there exists, by Proposition \ref{prop:3d-meas0}, a $r_0=r_0(z)>0$ such that for all $r\leq r_0$
 \begin{equation*}
     \frac{\mu(\B^3_r(z))}{\mu(\B^3_r(0))}=\begin{cases}
            0 &\text{ if }z\neq 0\\
            1 &\text{ if }z=0.
        \end{cases}
 \end{equation*}
     Taking $r\to 0^+$ immediately yields \eqref{eq:OM3d} (recall \eqref{eq:OMdefinition}). On the other hand, if there exists $r'>0$ for which $\mu(\B^3_{r'}(0))=0$ then, by monotonicity, we have $\mu(\B^3_{r}(0))=0$ for all $r\leq r'.$ Hence, neither the above ratio nor its limit as $r\to 0^+$ are well-defined. The proof is complete.    
\end{proof}

\noindent We emphasize that, regardless of whether $\mu(\B^3_r(0))=0$ for $r$ small enough or $ \mu(\B^3_r(0))>0$ for all $r$, the OM functional does not agree with the $\Phi_3^4$ action. This is not unreasonable, especially since $\Phi_3^4$ measure is mutually singular with respect to its pushforward by any smooth shift \cite{Hairer-phi4-note, hairer2024singularity}.

\section{Restoring finiteness in $3$ dimensions}\label{sec:recover}
Although the OM functional (in the sense of Theorem \ref{theorem:main-3d-full}) is degenerate, we can recover the $\Phi_3^4$ action for certain pairs of smooth functions $z_1,z_2$ by tying $n,r$ together. 
\begin{proposition}\label{prop:recover-1}
    Let $z_1, z_2\in C^\infty(\mathbb T^3)$ be so that $$\lim_{n\to\infty} \log n \left(\int_{\mathbb T^3}z_{1,n}^2(x)dx-\int_{\mathbb T^3} z_{2,n}^2(x) dx\right)=0.$$
    Let $n(r)$ be a sequence of natural numbers depending on $r$ so that $\lim_{r\to 0^+} \log n(r)r=0.$ Then 
    $$\lim_{r\to 0^+} \frac{\mu_{n(r)}(\mathcal{B}^3_r(z_1))}{\mu_{n(r)}(\mathcal{B}^3_r(z_2))}=\exp(S(z_2)-S(z_1)).$$
\end{proposition}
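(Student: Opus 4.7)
The plan is to revisit the computation performed in the proof of Proposition~\ref{prop:3d-meas0} and to exploit the joint scaling $r\log n(r)\to 0$ to turn the previously fatal error terms into vanishing contributions; the compatibility assumption on $z_1,z_2$ then handles the residual mass-renormalisation divergence. Throughout I interpret $B_r$ as the enhanced ball $\mathcal B^3_r$ from \eqref{eq:3d-ball}, since the required uniform control requires the Wick-power constraints.

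First I would apply the Cameron--Martin theorem to shift each ball to the origin: for some $z_i^\ast\in (C^{-1/2-\kappa})^\ast$ with $|z_i^\ast(\phi)|\lesssim r$ on $\mathcal B^3_r(0)$,
$$\mu_{n(r)}(\mathcal B^3_r(z_i))=\frac{1}{Z_{n(r)}}\int_{\mathcal B^3_r(0)}\exp\!\Bigl(-\tfrac14 V_{n(r)}(\phi_{n(r)}-z_{i,n(r)})-z_i^\ast(\phi)-\tfrac12\|z_i\|_{H^1_0}^2\Bigr)\mu_0(d\phi).$$
Then, exactly as in Proposition~\ref{prop:3d-meas0}, the binomial expansion of $V_{n(r)}(\phi_{n(r)}-z_{i,n(r)})$ yields
$$V_{n(r)}(\phi_{n(r)}-z_{i,n(r)})=V_{n(r)}(\phi_{n(r)})+\int_{\mathbb T^3}z_{i,n(r)}^4\,dx-C_{n(r)}\|z_{i,n(r)}\|_{L^2}^2+E_i(\phi,r),$$
where $E_i$ collects all cross terms containing at least one factor of $\phi_{n(r)}$ or its Wick powers and satisfies $|E_i(\phi,r)|\leq Kr\log n(r)$ uniformly on $\mathcal B^3_r(0)$, via Lemma~\ref{lem:dualityBound}, Lemma~\ref{lem:FourierConvergence}, and $|C_n|\lesssim\log n$. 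The essential new input, absent in the fixed-$n$ regime of Section~\ref{sec:d=3}, is that $Kr\log n(r)\to 0$ under the present hypothesis on $n(r)$.

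Pulling the $\phi$-independent pieces out of the two integrals, the ratio factors as
\begin{align*}
\frac{\mu_{n(r)}(\mathcal B^3_r(z_1))}{\mu_{n(r)}(\mathcal B^3_r(z_2))}
&=\exp\!\Bigl(-\tfrac14\bigl(\|z_{1,n(r)}\|_{L^4}^4-\|z_{2,n(r)}\|_{L^4}^4\bigr)+\tfrac{C_{n(r)}}{4}\bigl(\|z_{1,n(r)}\|_{L^2}^2-\|z_{2,n(r)}\|_{L^2}^2\bigr)\\
&\qquad-\tfrac12\bigl(\|z_1\|_{H^1_0}^2-\|z_2\|_{H^1_0}^2\bigr)\Bigr)\frac{J_1(r)}{J_2(r)},
\end{align*}
where $J_i(r):=\int_{\mathcal B^3_r(0)}\exp(-\tfrac14 V_{n(r)}(\phi_{n(r)})-\tfrac14 E_i(\phi,r)-z_i^\ast(\phi))\,\mu_0(d\phi)$. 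Sandwiching the $E_i$ and $z_i^\ast$ factors between their uniform bounds $\exp(\pm(Kr\log n(r)+Cr)/4)\to 1$ gives $J_1(r)/J_2(r)\to 1$. In the explicit prefactor, Lemma~\ref{lem:FourierConvergence} yields $\|z_{i,n(r)}\|_{L^4}^4\to \|z_i\|_{L^4}^4$, while the hypothesis $\log n\,(\|z_{1,n}\|_{L^2}^2-\|z_{2,n}\|_{L^2}^2)\to 0$ combined with $C_n=O(\log n)$ kills the middle term. Using $S(z)=\tfrac14\|z\|_{L^4}^4+\tfrac12\|z\|_{H^1_0}^2$ in the massless case, what survives is precisely $\exp(S(z_2)-S(z_1))$.

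The main obstacle is the delicate treatment of the mass-renormalisation contribution $-C_{n(r)}\|z_{i,n(r)}\|_{L^2}^2/4$, the sole term responsible for the degeneracy in Theorem~\ref{theorem:main-3d-full}. It becomes tractable here only through the combined effect of the two hypotheses: (i) the coupling $r\log n(r)\to 0$ erases the $\phi$-dependent part of the divergence uniformly on $\mathcal B^3_r(0)$, and (ii) the compatibility assumption on $z_1,z_2$ exactly cancels the remaining $\phi$-independent piece in the prefactor. A minor subsidiary issue, which I would check at the outset, is that $\mu_{n(r)}(\mathcal B^3_r(z_i))>0$ for all small enough $r$ so that the ratio is well-defined; this holds because $\mu_{n(r)}\ll\mu_0$ with a.s.\ strictly positive density and the constraints defining $\mathcal B^3_r(z_i)$ are satisfied on a nonempty open set near the smooth centre.
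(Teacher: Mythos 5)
Your argument is essentially the paper's own proof: both rerun the expansion from Proposition~\ref{prop:3d-meas0} with the deterministic terms $\|z_{i,n}\|_{L^4}^4$, $C_n\|z_{i,n}\|_{L^2}^2$, $\|z_i\|_{H_0^1}^2$ kept explicit, bound the cross terms uniformly on the enhanced ball by $Kr\log n(r)\to 0$, cancel the common integral over $\mathcal{B}^3_r(0)$ in the ratio, and let the compatibility hypothesis on $z_1,z_2$ together with $|C_n|\lesssim\log n$ remove the mass-renormalisation discrepancy. The only place you go beyond the paper is the claimed positivity of $\mu_{n(r)}(\mathcal{B}^3_r(z_i))$, and there your justification (a nonempty open set of fields near the smooth centre) does not work: for a fixed smooth $\phi$ the constraint $\|(\phi-z_i)_n^{:2:}\|_{C^{-1-\kappa}}<r$ fails for large $n$ because the Wick constant $c_n$ diverges, so positivity is a genuine small-ball/support question for the enhanced Gaussian data, which the paper likewise leaves implicit (compare the conditional hypothesis in Theorem~\ref{theorem:main-3d-full}).
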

\begin{proof}
Let $z_i\in C^\infty(\mathbb T^3)$ for $i=1,2$. Then in view of \eqref{eq:mu_nMeasures3D} we have that
$$\mu_n(\B^3_r(z_i))=\frac{1}{Z_{n}}\int_{\B^3_r(z_i)}\exp\left(-\frac{1}{4}V_n(\phi_n)\right)\mu_0(d\phi),$$
where 
$$V_n(\phi_n):=\int_{\mathbb T^3} \big(\phi_n^{:4:}(x)- C_n \phi_n^{:2:}(x)\big)dx$$
and $C_n<0$ is a renormalisation constant that grows logarithmically as $n\to\infty$. As $z_i$ is smooth, by the Cameron-Martin theorem there is a $z_i^\ast\in (C^{-1/2-\kappa}(\mathbb T^3))^\ast$ so that
\begin{align*}
    \mu_n(\B^3_r(z_i))&
    =\frac{1}{Z_{n}} \int_{\B^3_r(0)}\exp\left(-\frac{1}{4}V_n(\phi_n-{z_i,n})-z_i^\ast(\phi)-\frac{1}{2}\|z_i\|_{H_0^1}^2\right)\mu_0(d\phi).
\end{align*}
Applying the binomial theorem to $V_n$ gives that 
\begin{align*}
    V_n(\phi_n-{z_{i,n}})&=\int_{\mathbb T^3} (\phi_n^{:4:}-4\phi_n^{:3:}z_{i,n}+6\phi_n^{:2:} z_{i,n}^2-4\phi_nz_{i,n}^3+z_{i,n}^4) dx\\
    &~~-\int_{\mathbb T^3}( C_n \phi_n^{:2:}-2 C_n\phi_n z_{i,n}+ C_nz_{i,n}^2)dx.
\end{align*}
On $\B^3_r(0)$ we have that there is some constant $K>0$ so that, for any $\kappa'>\kappa$
\begin{align*}
    \left|V_n(\phi_n-z_{i,n})-V_n(\phi_n)+ C_n\|z_{i,n}\|_{L^2}^2\right|&\leq 4\|\phi_n^{:3:}\|_{C^{-3/2-\kappa}}\|z_{i,n}\|_{C^{3/2+\kappa'}}\\
    &+6\|\phi_n^{:2:}\|_{C^{-1-\kappa}}\|z_{i,n}^2\|_{C^{1+\kappa'}}\\
    &+4\|\phi_n\|_{C^{-1/2-\kappa}}\|z_{i,n}^3\|_{C^{1/2+\kappa'}}\\
    &+\|z_{i,n}\|_{L^4}^4\\
    &+ |C_n| \|\phi_n^{:2:}\|_{C^{-1-\kappa}}\|1\|_{C^{1+\kappa'}}\\
    &+2|C_n|\|\phi_n\|_{C^{-1/2-\kappa}}\|z_{i,n}\|_{C^{1/2+\kappa'}}\\
    &\leq K r \log n,
\end{align*}
where the latter follows from the fact that $|C_n|\lesssim \log n$ and several applications of Lemma \ref{lem:dualityBound}  and Lemma \ref{lem:FourierConvergence} to obtain uniform boundedness of $z_{i,n}$ over $n$ in various norms. Thus, from the lower bound
$$ -C_n\|z_{i,n}\|_{L^2}^2-K r \log n\leq V_n(\phi_n-z_{i,n})-V_n(\phi_n)      $$
(recall that $C_n<0$) we can find $r_0=r_0(\|z\|_{L^2})$ sufficiently small and constants $K', K''>0$ so that
\begin{align*}
    \mu_n(\B^3_r(z_i))&\leq \frac{1}{Z_{n}}\int_{\B^3_r(0)}\exp\bigg(-\frac{1}{4}V_n(\phi_n)-K' \log n-z_i^\ast(\phi) -\frac{1}{2}\|z_i\|_{H_0^1}^2\bigg)\mu_0(d\phi)\\
    &\leq \exp(-K' \log n-K'')\mu_n(\B^3_r(0))\leq \exp(-K' \log n-K''),
\end{align*}
where we used that $|z_i^\ast(\phi)|\lesssim r$. 
Therefore, for $z_i\in C^\infty(\T^3),$ we have that $$\mu_n(\mathcal{B}^3_r(z_i))\leq \exp(K r\log n(r)-C_n\|z_{i,n}\|_{L^2}^2-S(z_i))$$ and
    $$\mu_n(\mathcal{B}^3_r(z_i))\geq \exp(-K r\log n(r)-C_n\|z_{i,n}\|_{L^2}^2-S(z_i))$$
    for $i=1,2$. Thus, 
    $$  \frac{\mu_{n(r)}(\mathcal{B}^3_r(z_1))}{\mu_{n(r)}(\mathcal{B}^3_r(z_2))}\leq \exp\bigg(2K r\log n(r) -2C_n\big( \|z_{1,n}\|_{L^2}^2-\|z_{2,n}\|_{L^2}^2  \big)+S(z_2)-S(z_1) \bigg)   $$
    and 
    $$  \frac{\mu_{n(r)}(\mathcal{B}^3_r(z_1))}{\mu_{n(r)}(\mathcal{B}^3_r(z_2))}\geq \exp\bigg(-2K r\log n(r) -2C_n\big( \|z_{1,n}\|_{L^2}^2-\|z_{2,n}\|_{L^2}^2  \big)+S(z_2)-S(z_1) \bigg).   $$
     We conclude by using that $|C_n|\lesssim \log n,$ as well as the assumptions on the $z_i's.$ 
\end{proof}
\begin{remark}
    For an example of $z_1,z_2$ that satisfy the assumptions of Proposition \ref{prop:recover-1}, let $z_1,z_2$ have finite spectral support and equal $L^2$ norms. Another example is given by $z_1, z_2$ with only finitely many non-equal Fourier modes and equal $L^2$ norms. Indeed, this follows from Parseval's theorem.
\end{remark}

\begin{remark}[On the smoothness of $z$] Proposition \ref{prop:recover-1} is proved for smooth $z.$ This requirement was only made to guarantee convergence of the Fourier series $z_n$ in various H\"older or uniform norms. In fact, a more careful power counting, along with Lemma \ref{lem:FourierConvergence}, shows that the latter remains true for $z\in C^{3+\delta}(\T^3)$ with $\delta>0$ arbitrarily small. In particular, it holds for $z_1, z_2\in C^{3+\delta}$ that differ on only finitely many Fourier modes with the same $L^2$ norm.
\end{remark}

For the last part of this section, we consider a generalised small ball ratio over several balls (instead of just $2$). This limit is in fact finite and can be expressed in terms $\Phi_3^4$ action for all pairs of smooth $z_1,z_2.$
\begin{proposition}\label{prop:3d-restore-finiteness}
    Let $z_1,z_2\in C^\infty(\mathbb T^3)$. Let $n(r)$ be any function of $r$ with $\lim_{r\to 0^+}n(r)=\infty$ so that $\lim_{r\to 0^+}\log(n(r))r=0.$ Then the following limit holds
    \begin{align*}
        \lim_{r\to 0^+}&\frac{(\mu_{n(r)}(\mathcal{B}^3_r(z_1)))^3\mu_{n(r)}(\mathcal{B}^3_r(3z_1-2z_2))}{\mu_{n(r)}(\mathcal{B}^3_r(z_2))(\mu_{n(r)}(\mathcal{B}^3_r(2z_1-z_2)))^3}\\
        &=\exp\left(-3S(z_1)-S(3z_1-2z_2)+S(z_2)+3S(2z_1-z_2)\right).
    \end{align*}
\end{proposition}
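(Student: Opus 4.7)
The plan is to chain together the two-sided bounds developed in the proofs of Proposition~\ref{prop:3d-meas0} and Proposition~\ref{prop:recover-1}, and then observe that the specific linear combination in the statement has been engineered to annihilate the divergent $C_n\|z_n\|_{L^2}^2$ contributions. Concretely, for any smooth $z$ the Cameron--Martin shift together with the binomial expansion of $V_n$ carried out in the proof of Proposition~\ref{prop:3d-meas0} yield a sandwich estimate of the form
\[
\bigl|\log\mu_n(B_r(z))-\log\mu_n(B_r(0))+C_n\|z_n\|_{L^2}^2+S(z)\bigr|\leq K r\log n+\epsilon_n(z),
\]
where $\epsilon_n(z)\to 0$ as $n\to\infty$ because $\|z_n\|_{L^4}^4\to\|z\|_{L^4}^4$ and $\|z_n\|_{H_0^1}^2\to\|z\|_{H_0^1}^2$ by Lemma~\ref{lem:FourierConvergence}. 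I would apply this bound to each of the four smooth centers $z_1,\;z_2,\;2z_1-z_2,\;3z_1-2z_2$.

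Since the ratio in the statement contains four $\mu_{n(r)}$ factors in the numerator and four in the denominator, the $\log\mu_n(B_r(0))$ terms cancel out exactly. Taking logarithms of the ratio and collecting terms one obtains
\[
\log\frac{\mu_{n(r)}(B_r(z_1))^3\mu_{n(r)}(B_r(3z_1-2z_2))}{\mu_{n(r)}(B_r(z_2))\mu_{n(r)}(B_r(2z_1-z_2))^3}=-C_{n(r)}Q_{n(r)}(z_1,z_2)+T(z_1,z_2)+O(r\log n(r))+o_{n(r)}(1),
\]
with $T(z_1,z_2):=-3S(z_1)-S(3z_1-2z_2)+S(z_2)+3S(2z_1-z_2)$ being the desired exponent and
\[
Q_n(z_1,z_2):=3\|z_{1,n}\|_{L^2}^2+\|(3z_1-2z_2)_n\|_{L^2}^2-\|z_{2,n}\|_{L^2}^2-3\|(2z_1-z_2)_n\|_{L^2}^2.
\]
Using linearity of the Fourier projection $\phi\mapsto\phi_n$, verifying $Q_n\equiv 0$ reduces to the algebraic identity $3\|u\|^2+\|3u-2v\|^2-\|v\|^2-3\|2u-v\|^2=0$ for all $u,v$ in an inner product space. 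A direct expansion confirms this: the $\|u\|^2$ coefficient is $3+9-12=0$, the $\|v\|^2$ coefficient is $4-1-3=0$, and the cross-term $\langle u,v\rangle$ has coefficient $-12+12=0$. Hence the potentially divergent $C_n\log n$ contribution vanishes identically, regardless of $n$.

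The remaining error terms satisfy $r\log n(r)\to 0$ by hypothesis and $\epsilon_{n(r)}\to 0$ since $n(r)\to\infty$; sending $r\to 0^+$ therefore yields the claimed limit $\exp(T(z_1,z_2))$. The main conceptual obstacle is recognizing that the exponents $(3,1,-1,-3)$ and shifts $(z_1,3z_1-2z_2,z_2,2z_1-z_2)$ were chosen precisely to produce a null quadratic form in $(z_1,z_2)$, which in turn annihilates the otherwise-divergent $C_n$ factor. Once this algebraic cancellation is identified, the analytic content of the proof is merely a repetition of the sandwich estimate already established for Proposition~\ref{prop:recover-1}.
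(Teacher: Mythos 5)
Your proposal is correct and follows essentially the same route as the paper: a sandwich estimate from the Cameron--Martin shift and Wick binomial expansion (as in Propositions \ref{prop:3d-meas0} and \ref{prop:recover-1}), followed by the observation that the coefficients $(3,1,-1,-3)$ make the quadratic form $3\|z_{1,n}\|_{L^2}^2+\|(3z_1-2z_2)_n\|_{L^2}^2-\|z_{2,n}\|_{L^2}^2-3\|(2z_1-z_2)_n\|_{L^2}^2$ vanish identically, so the $C_n$ divergence cancels and the $r\log n(r)\to 0$ hypothesis kills the remaining error. Your explicit tracking of the $\mu_{n}(B_r(0))$ factors (four in the numerator, four in the denominator) is a slightly more careful bookkeeping of the same argument, not a different method.
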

\begin{proof}
    Following the proof of Proposition \ref{prop:recover-1}, we have for $z\in C^\infty(\mathbb T^3)$ that 
    $$\mu_n(\mathcal{B}^3_r(z))\leq \exp(-C_n\|z_n\|_{L^2}^2)\exp\left(-\frac{1}{4}\|z_n\|_{L^4}^4-\frac{1}{2}\|z\|_{H_0^1}^2\right)\exp(Kr \log n)$$
    and 
    $$\mu_n(\mathcal{B}^3_r(z))\geq \exp(-C_n\|z_n\|_{L^2}^2)\exp\left(-\frac{1}{4}\|z_n\|_{L^4}^4-\frac{1}{2}\|z\|_{H_0^1}^2\right)\exp(-Kr \log n).$$
    Noting that 
    $$\frac{\exp(-3 C_n\|(z_1)_n\|_{L^2}^2)- C_n\|(3z_1-2z_2)_n\|_{L^2}^2)}{\exp(- C_n\|(z_2)_n\|_{L^2}^2-3 C_n\|(2z_1-z_2)_n\|_{L^2}^2)}=1,$$
    we have that 
    \begin{align*}
        \lim_{r\to 0^+}&\frac{(\mu_{n(r)}(\mathcal{B}^3_r(z_1)))^3\mu_{n(r)}(\mathcal{B}^3_r(3z_1-2z_1))}{\mu_{n(r)}(\mathcal{B}^3_r(z_2))(\mu_{n(r)}(\mathcal{B}^3_r(2z_1-z_2)))^3}\\
        &\leq\lim_{r\to 0^+}\exp\left(-3S(z_1)-S(3z_2-2z_2)+S(z_2)+3S(2z_1-z_2))\right)\exp(8Kr\log n)\\
        &=\exp\left(-3S(z_1)-S(3z_2-2z_1)+S(z_2)+3S(2z_1-z_2)\right)
    \end{align*}
    and 
       \begin{align*}
        \lim_{r\to 0^+}&\frac{(\mu_{n(r)}(\mathcal{B}^3_r(z_1)))^3\mu_{n(r)}(\mathcal{B}^3_r(3z_1-2z_1))}{\mu_{n(r)}(\mathcal{B}^3_r(z_2))(\mu_{n(r)}(\mathcal{B}^3_r(2z_1-z_2)))^3}\\
        &\geq\lim_{r\to 0^+}\exp\left(-3S(z_1)-S(3z_1-2z_2)+S(z_2)+3 S(2z_1-z_2)\right)\exp(-8Kr\log n)\\
        &=\exp\left(-3S(z_1)-S(3z_2-2z_1)+S(z_2)+3S(2z_1-z_2)\right).
    \end{align*}
\end{proof}
\begin{remark}
The intuition for Proposition \ref{prop:3d-restore-finiteness} is a generalisation of the fact that the OM ratio is only defined up to an additive constant. If we take higher ordered differences, we can make ``higher ordered" OM ratios that are only defined up to linear, quadratic, and so on, terms. The double difference of a function $f$ between $z_1,z_2$ is $f(z_1)+f(2z_2-z_1)-2f(z_2)$. If $f$ is linear, then this difference is $0$. Therefore if there is a linear divergence, the ratio $$\lim_{r\to 0^+}\frac{\mu(\mathcal{B}^3_r(z_1))\mu(\mathcal{B}^3_r(2z_2-z_1))}{(\mu(\mathcal{B}^3_r(z_2)))^2}$$ is expected to be finite. This second order OM ratio is only defined up to a linear function. The triple difference of a function $f$ between $z_1,z_2$ is $3f(z_1)+f(3z_1-2z_2)-f(z_2)-3f(2z_1-z_2),$ which vanishes if $f$ is quadratic. Therefore, the ``third order" OM functional of Propositon \ref{prop:3d-restore-finiteness} is only defined up to quadratic terms, which is why the quadratic divergence $C_n\int_{\mathbb T^3} z_n^2 dx$ disappears in Proposition \ref{prop:3d-restore-finiteness}. Note that if a measure has a finite ``order $1$" OM ratio, the latter is in addition an ``order $2$" OM ratio and so on. 
\end{remark}

\section{Outlook and conclusion}\label{sec:Conclusion}

As we argued above, the study of OM density functions for $\Phi^4_d$ (as well as other measures arising in CQFT) is interesting and can potentially lead to a deeper understanding of their infinitesimal properties. In this work, we considered (generalised) OM functionals for the $\Phi^4_d$ measures in finite volume for $d=1,2,3.$ 

In $d=1,$ we showed that the classical OM functional coincides, as expected, with the $\Phi^4$ action $S$ \eqref{eq:action}. In $d=2,$ we proposed a natural generalisation and were able to show that, in the setting of Wick renormalisation, the latter agrees with $S$ on $H^1\cap C^{\alpha+\kappa}$ for any $\alpha, \kappa>0.$ We emphasize here that Theorem \ref{thm:main-2d} covers the $P(\Phi)_2$ measures but is also expected to morally hold in fractional dimensions $d,$ within the Da Prato-Debussche regime, as long as the $\Phi^4_d$ is absolutely continuous with respect to the GFF (see e.g. \cite{chandra2025non} for relevant results on ``fractional" $\Phi^4-$measures).

In dimension $3$, we showed that several reasonable generalisations of OM lead to degenerate functionals in the small radius limit. Lastly, we proved that, under certain conditions on the of pairs functions $z_1, z_2,$ small ball ratios centered at $z_i$ are asymptotically equal to the ``correct" $\Phi^4_3$ action in a joint small radius-large frequency limit. Focusing on $d=3,$ we emphasize here that our degeneracy result, Theorem \ref{theorem:main-3d-full}, does not preclude the existence of a non-degenerate OM functional that agrees with $S.$ Indeed, as we highlighted in Section \ref{subsec:OMbackground}, OM functionals are quite sensitive to choices of underlying metrics (in contrast to LDPs whose nature is more topological). 

Theorem \ref{theorem:main-3d-full} relies on elementary arguments, does not make use of non-Gaussian reference measures and only shows that several reasonable choices of metrics lead to degenerate answers. The study of OM functionals for $\Phi^4_3$ under different metrics and with  the aid of more sophisticated tools (such as e.g. the variational framework or drift reference measure of \cite{barashkov2020variational, barashkov2021varphi}) provides an interesting direction for future work. That being said, we emphasize here that OM functionals are less robust than LDPs. Indeed, the $\Phi_3^4$ action has been shown to coincide with the small-temperature LDP rate function \cite{klose2024large} via contraction principles and stochastic quantisation arguments. Nevertheless, the absence of contraction-type principles for OM densities presents an important obstacle in developing more systematic approaches that rely on rough analysis.

\section*{Statements and Declarations}
\begin{itemize}
    \item Conflict of interest: Not applicable. The authors have no competing interests to declare.
    \item Data availability: Not applicable. The authors confirm that no data was generated or analysed during the preparation of this work.
\end{itemize}

 \bibliographystyle{plain}
	   \bibliography{References}

@article {OM-Gamma-1,
    AUTHOR = {Ayanbayev, Birzhan and Klebanov, Ilja and Lie, Han Cheng and
              Sullivan, T. J.},
     TITLE = {{$\Gamma$}-convergence of {O}nsager-{M}achlup functionals:
              {I}. {W}ith applications to maximum {\it a posteriori}
              estimation in {B}ayesian inverse problems},
   JOURNAL = {Inverse Problems},
  FJOURNAL = {Inverse Problems. An International Journal on the Theory and
              Practice of Inverse Problems, Inverse Methods and Computerized
              Inversion of Data},
    VOLUME = {38},
      YEAR = {2022},
    NUMBER = {2},
     PAGES = {Paper No. 025005, 32},
      ISSN = {0266-5611,1361-6420},
   MRCLASS = {62F15 (62R20)},
  MRNUMBER = {4374008},
       DOI = {10.1088/1361-6420/ac3f81},
       URL = {https://doi.org/10.1088/1361-6420/ac3f81},
}

@article{klose2024large,
  title={Large Deviations of the  ${\Phi}^{4}_3 $ Measure via Stochastic Quantisation},
  author={Klose, Tom and Mayorcas, Avi},
  journal={arXiv preprint arXiv:2402.00975},
  year={2024}
}

@book {Pphi2-book,
    AUTHOR = {Simon, Barry},
     TITLE = {The {$P(\phi )\sb{2}$} {E}uclidean (quantum) field theory},
    SERIES = {Princeton Series in Physics},
 PUBLISHER = {Princeton University Press, Princeton, NJ},
      YEAR = {1974},
     PAGES = {xx+392},
   MRCLASS = {81.60},
  MRNUMBER = {489552},
MRREVIEWER = {Raymond\ Streater},
}

@article{Hairer-phi4-note,
title = {{(Lack of) quasi-shift invariance of the $\Phi^4_3$ measure}},
author = {Martin Hairer},
year={Unpublished note, 2026},
journal={https://codeberg.org/tgkolda/1stproof/src/branch/main/2026-02-batch/FirstProofSolutionsComments.pdf}
}

@article{barashkov2020variational,
  title={A variational method for ${\Phi}_3^4$},
   AUTHOR = {Barashkov, Nikolay and Gubinelli, Massimiliano},
   JOURNAL = {Duke Math. J.},
  FJOURNAL = {Duke Mathematical Journal},
    VOLUME = {169},
      YEAR = {2020},
    NUMBER = {17},
     PAGES = {3339--3415},
      ISSN = {0012-7094,1547-7398},
   MRCLASS = {81T08 (60B10 60L40 93E20)},
  MRNUMBER = {4173157},
MRREVIEWER = {Jiang\ Lun\ Wu},
       DOI = {10.1215/00127094-2020-0029},
       URL = {https://doi.org/10.1215/00127094-2020-0029},
}

@article {Durr-Bach,
    AUTHOR = {D\"urr, Detlef and Bach, Alexander},
     TITLE = {The {O}nsager-{M}achlup function as {L}agrangian for the most
              probable path of a diffusion process},
   JOURNAL = {Comm. Math. Phys.},
  FJOURNAL = {Communications in Mathematical Physics},
    VOLUME = {60},
      YEAR = {1978},
    NUMBER = {2},
     PAGES = {153--170},
      ISSN = {0010-3616,1432-0916},
   MRCLASS = {82.60},
  MRNUMBER = {489608},
MRREVIEWER = {Burton\ H.\ Voorhees},
       URL = {http://projecteuclid.org/euclid.cmp/1103904077},
}

@article {Dashti,
    AUTHOR = {Dashti, Masoumeh and Law, Kody and Stuart, Andrew and Voss, Jochen},
     TITLE = {M{AP} estimators and their consistency in {B}ayesian
              nonparametric inverse problems},
   JOURNAL = {Inverse Problems},
  FJOURNAL = {Inverse Problems. An International Journal on the Theory and
              Practice of Inverse Problems, Inverse Methods and Computerized
              Inversion of Data},
    VOLUME = {29},
      YEAR = {2013},
    NUMBER = {9},
     PAGES = {095017, 27},
      ISSN = {0266-5611,1361-6420},
   MRCLASS = {62G05 (60B11 60G15 62F15 65J22)},
  MRNUMBER = {3104933},
MRREVIEWER = {Richard\ Kowar},
       DOI = {10.1088/0266-5611/29/9/095017},
       URL = {https://doi.org/10.1088/0266-5611/29/9/095017},
}

@article {Self-FWOM,
    AUTHOR = {Selk, Zachary and Honnappa, Harsha},
     TITLE = {The small-noise limit of the most likely element is the most
              likely element in the small-noise limit},
   JOURNAL = {ALEA Lat. Am. J. Probab. Math. Stat.},
  FJOURNAL = {ALEA. Latin American Journal of Probability and Mathematical
              Statistics},
    VOLUME = {21},
      YEAR = {2024},
    NUMBER = {1},
     PAGES = {849--862},
      ISSN = {1980-0436},
   MRCLASS = {60F10},
  MRNUMBER = {4754122},
       DOI = {10.30757/alea.v21-35},
       URL = {https://doi.org/10.30757/alea.v21-35},
}

@article {OM-Original,
    AUTHOR = {Onsager, Lars and Machlup, Stefan},
     TITLE = {Fluctuations and irreversible processes},
   JOURNAL = {Phys. Rev. (2)},
  FJOURNAL = {Physical Review. Series II},
    VOLUME = {91},
      YEAR = {1953},
     PAGES = {1505--1512},
      ISSN = {0031-899X,1536-6065},
   MRCLASS = {80.1X},
  MRNUMBER = {57765},
MRREVIEWER = {L.\ Van Hove},
}

@article {Different-modes,
    AUTHOR = {Lambley, Hefin and Sullivan, Tim},
     TITLE = {An order-theoretic perspective on modes and maximum a
              posteriori estimation in {B}ayesian inverse problems},
   JOURNAL = {SIAM/ASA J. Uncertain. Quantif.},
  FJOURNAL = {SIAM/ASA Journal on Uncertainty Quantification},
    VOLUME = {11},
      YEAR = {2023},
    NUMBER = {4},
     PAGES = {1195--1224},
      ISSN = {2166-2525},
   MRCLASS = {62G05 (06F99 28A75 28C15 60B05 62R20)},
  MRNUMBER = {4657315},
MRREVIEWER = {Emanuele\ Taufer},
       DOI = {10.1137/22M154243X},
       URL = {https://doi.org/10.1137/22M154243X},
}

@incollection {MH-phi4-rs,
    AUTHOR = {Hairer, Martin},
     TITLE = {Regularity structures and the dynamical {$\Phi^4_3$} model},
 BOOKTITLE = {Current developments in mathematics 2014},
     PAGES = {1--49},
 PUBLISHER = {Int. Press, Somerville, MA},
      YEAR = {2016},
      ISBN = {978-1-57146-313-5; 1-57146-313-5},
   MRCLASS = {60H15 (35R60)},
  MRNUMBER = {3468250},
MRREVIEWER = {Peter\ Karl\ Friz},
}

@article {da-prato-debussche,
    AUTHOR = {Da Prato, Giuseppe and Debussche, Arnaud},
     TITLE = {Strong solutions to the stochastic quantization equations},
   JOURNAL = {Ann. Probab.},
  FJOURNAL = {The Annals of Probability},
    VOLUME = {31},
      YEAR = {2003},
    NUMBER = {4},
     PAGES = {1900--1916},
      ISSN = {0091-1798,2168-894X},
   MRCLASS = {81S20 (28C20 42B05)},
  MRNUMBER = {2016604},
       DOI = {10.1214/aop/1068646370},
       URL = {https://doi.org/10.1214/aop/1068646370},
}

@article {GIP-paracontrolled,
    AUTHOR = {Gubinelli, Massimiliano and Imkeller, Peter and Perkowski,
              Nicolas},
     TITLE = {Paracontrolled distributions and singular {PDE}s},
   JOURNAL = {Forum Math. Pi},
  FJOURNAL = {Forum of Mathematics. Pi},
    VOLUME = {3},
      YEAR = {2015},
     PAGES = {e6, 75},
      ISSN = {2050-5086},
   MRCLASS = {60H15 (35S50)},
  MRNUMBER = {3406823},
       DOI = {10.1017/fmp.2015.2},
       URL = {https://doi.org/10.1017/fmp.2015.2},
}

@article{zeitouni1989onsager,
  title={On the {O}nsager-{M}achlup functional of diffusion processes around non ${C}^2$ curves},
  author={Zeitouni, Ofer},
  journal={The Annals of Probability},
  pages={1037--1054},
  year={1989},
  publisher={JSTOR}
}

@article {Hairer-support-sspde,
    AUTHOR = {Hairer, Martin and Sch\"onbauer, Philipp},
     TITLE = {The support of singular stochastic partial differential
              equations},
   JOURNAL = {Forum Math. Pi},
  FJOURNAL = {Forum of Mathematics. Pi},
    VOLUME = {10},
      YEAR = {2022},
     PAGES = {Paper No. e1, 127},
      ISSN = {2050-5086},
   MRCLASS = {60H15 (35R60)},
  MRNUMBER = {4370580},
       DOI = {10.1017/fmp.2021.18},
       URL = {https://doi.org/10.1017/fmp.2021.18},
}

@article {Friz-support-sspde,
    AUTHOR = {Chouk, Khalil and Friz, Peter},
     TITLE = {Support theorem for a singular {SPDE}: the case of g{PAM}},
   JOURNAL = {Ann. Inst. Henri Poincar\'e{} Probab. Stat.},
  FJOURNAL = {Annales de l'Institut Henri Poincar\'e{} Probabilit\'es et
              Statistiques},
    VOLUME = {54},
      YEAR = {2018},
    NUMBER = {1},
     PAGES = {202--219},
      ISSN = {0246-0203,1778-7017},
   MRCLASS = {60H15},
  MRNUMBER = {3765886},
MRREVIEWER = {Jingyu\ Huang},
       DOI = {10.1214/16-AIHP800},
       URL = {https://doi.org/10.1214/16-AIHP800},
}

@article{barashkov2021varphi,
  title={{The  ${\Phi}_3^{4}$ measure via Girsanov’s theorem}},
  author={Barashkov, Nikolay and Gubinelli, Massimiliano},
  journal={Electronic Journal of Probability},
  volume={26},
  pages={1--29},
  year={2021},
  publisher={The Institute of Mathematical Statistics and the Bernoulli Society}
}

@article{lyons1999conditional,
  title={{Conditional exponential moments for iterated Wiener integrals}},
  author={Lyons, Terry and Zeitouni, Ofer},
  journal={Annals of probability},
  pages={1738--1749},
  year={1999},
  publisher={JSTOR}
}

@book{glimm2012quantum,
  title={Quantum physics: a functional integral point of view},
  author={Glimm, James and Jaffe, Arthur},
  year={2012},
  publisher={Springer Science \& Business Media}
}

@article{carmona1992traces,
  title={{Traces of random variables on Wiener space and the Onsager-Machlup functional}},
  author={Carmona, Ren{\'e} and Nualart, David},
  journal={Journal of functional analysis},
  volume={107},
  number={2},
  pages={402--438},
  year={1992},
  publisher={Elsevier}
}

@article{carfagnini2024onsager,
  title={{Onsager--Machlup functional for SLE $\kappa$ loop measures}},
  author={Carfagnini, Marco and Wang, Yilin},
  journal={Communications in Mathematical Physics},
  volume={405},
  number={11},
  pages={258},
  year={2024},
  publisher={Springer}
}

@article{chandra2025non,
  title={{Non-Gaussianity of invariant measures to SPDEs in Da Prato--Debussche regime}},
  author={Chandra, Ajay and Chevyrev, Ilya},
  journal={Stochastics and Partial Differential Equations: Analysis and Computations},
  pages={1--27},
  year={2025},
  publisher={Springer}
}

@book{grafakos2014fourier,
  title={{Classical Fourier Analysis}},
  author={Grafakos, Loukas},
  year={2014},
  publisher={Springer}
}

@misc{bahouri2011fourier,
  title={{Fourier Analysis and Nonlinear Partial Differential Equations}},
  author={Bahouri, Hajer and Chemin, Jean-Yves and Danchin, Rapha{\"e}l},
  year={2011},
  publisher={Springer}
}

@article{hairer2007analysis,
  title={{Analysis of SPDEs arising in path sampling part II: The nonlinear case}},
  author={Hairer, Martin and Stuart, Andrew and Voss, Jochen},
  journal={The Annals of Applied Probability},
  volume={17},
  number={5/6},
  pages={1657--1706},
  year={2007}
}

@article{hairer2018tightness,
  title={{Tightness of the Ising--Kac model on the two-dimensional torus}},
  author={Hairer, Martin and Iberti, Massimo},
  journal={Journal of Statistical Physics},
  volume={171},
  number={4},
  pages={632--655},
  year={2018},
  publisher={Springer}
}

@article{cassandro1995corrections,
  title={{Corrections to the critical temperature in 2d Ising systems with Kac potentials}},
  author={Cassandro, Marzio and Marra, Rossana and Presutti, Errico},
  journal={Journal of statistical physics},
  volume={78},
  number={3},
  pages={1131--1138},
  year={1995},
  publisher={Springer}
}

@phdthesis{barashkov2022variational,
  title={{A variational approach to Gibbs measures on function spaces}},
  author={Barashkov, Nikolay},
  year={2022},
  school={Universit{\"a}ts-und Landesbibliothek Bonn}
}

@inproceedings{nelson1966quartic,
  title={A quartic interaction in two dimensions},
  author={Nelson, Edward},
  booktitle={Mathematical Theory of Elementary Particles, Proc. Conf., Dedham, Mass., 1965},
  pages={69--73},
  year={1966},
  organization={MIT press}
}

@article{feldman1974lambdaphi34,
  title={The $\lambda\phi_3^4$ field theory in a finite volume},
  author={Feldman, Joel},
  journal={Communications in Mathematical Physics},
  volume={37},
  number={2},
  pages={93--120},
  year={1974},
  publisher={Springer}
}

@article{hairer2014theory,
  title={A theory of regularity structures},
  author={Hairer, Martin},
  journal={Inventiones mathematicae},
  volume={198},
  number={2},
  pages={269--504},
  year={2014},
  publisher={Springer}
}

@article{gasteratos2024importance,
  title={Importance sampling for stochastic reaction--diffusion equations in the moderate deviation regime},
  author={Gasteratos, Ioannis and Salins, Michael and Spiliopoulos, Konstantinos},
  journal={Stochastics and Partial Differential Equations: Analysis and Computations},
  volume={12},
  number={4},
  pages={2081--2150},
  year={2024},
  publisher={Springer}
}

@article{hairer2024singularity,
  title={Singularity of solutions to singular {SPDE}s},
  author={Hairer, Martin and Kusuoka, Seiichiro and Nagoji, Hirotatsu},
  journal={arXiv preprint arXiv:2409.10037},
  year={2024}
}

\appendix
\end{document}